\documentclass[a4paper,12pt]{amsart}
\usepackage[margin=2.5cm]{geometry}
\usepackage{amsmath,amsthm,amssymb,amsfonts,mathrsfs,mathtools,bm,tikz,amscd}
\usepackage[T1]{fontenc}
\usepackage{enumerate}
\usepackage{lmodern}
\usepackage[utf8]{inputenc}
\usepackage[numbers]{natbib}
\usepackage{hyperref}
\usepackage{url}
\usepackage{xcolor}
\allowdisplaybreaks
\numberwithin{equation}{section}
\usetikzlibrary{arrows.meta}

\theoremstyle{definition}
\newtheorem{dfn}{Definition}[section]

\newtheorem{rmk}[dfn]{Remark}

\theoremstyle{plain} 
\newtheorem{thm}[dfn]{Theorem}
\newtheorem{prop}[dfn]{Proposition}

\newtheorem{lem}[dfn]{Lemma}

\newcommand{\N}{\mathbb{N}}
\newcommand{\Z}{\mathbb{Z}}
\newcommand{\R}{\mathbb{R}}

\usepackage{enumitem}

\title[Existence of Kelvin-Invariant Positive Solutions]{Existence of Kelvin-Invariant Positive Solutions
for Critical Elliptic Equations with Variable Coefficients 
via Profile Decomposition}

\author{Yohei Sato}
\address{Department of Mathematics, Saitama University, Shimo-Okubo 255, Sakura-ku Saitama-shi, 338-8570, JAPAN}
\email{ ysato@rimath.saitama-u.ac.jp}

\author{Reo Suzuki}
\address{Department of Mathematics, Saitama University, Shimo-Okubo 255, Sakura-ku Saitama-shi, 338-8570, JAPAN}
\email{ suzuki.r.990@ms.saitama-u.ac.jp}

\begin{document}

\begin{abstract}
In this paper, we consider the following critical nonlinear elliptic equation:
\begin{equation}
\tag{$\mathcal{P}_a$}
\left\{ 
\begin{aligned}    
- & \Delta u = a(x) |u|^{2^*-2}u \quad \text{in } \mathbb{R}^N\\
& u \in \mathcal{D}^{1,2}(\mathbb{R}^N)
\end{aligned} 
\right.
\end{equation}
where $N \ge 3$, $2^* = \frac{2N}{N - 2}$, 
$a(x) \in C(\mathbb{R}^N,\mathbb{R})$ is 
a positive function that is invariant under the map $x \to -\frac{x}{|x|^2}$. 
Under some assumptions on $a(x)$, we show 
the existence of a positive solution to 
($\mathcal{P}_a$) that is invariant under 
the Kelvin transform. 
The symmetry condition imposed here 
is substantially weaker than the invariance 
under a noncompact symmetry group that 
is typically assumed in the literature.
The key to the proof is a classification of the 
Palais--Smale sequences of 
the associated energy functional.
To this end, we establish a new abstract 
profile decomposition theorem incorporating 
symmetries such as the Kelvin transform.

\medskip

\noindent
MSC2010: 
35J20, %Variational methods for second-order elliptic equations
35J61, %Semilinear elliptic equations
47J30, %View Publications (2000-now)  Variational methods 

\noindent
Keywords: elliptic equations; variational method; Kelvin transform; profile decomposition; critical exponent;

\end{abstract}

\maketitle

\section{Introduction}
In this paper, we consider the following critical nonlinear elliptic equation:
\begin{equation}
\label{Pa}
\tag{$\mathcal{P}_a$}
\left\{ 
\begin{aligned}    
	- & \Delta u = a(x) |u|^{2^*-2}u \quad \text{in } \mathbb{R}^N\\
	& u \in \mathcal{D}^{1,2}(\mathbb{R}^N)
\end{aligned} 
\right.
\end{equation}
where $N \ge 3$, $2^* = \frac{2N}{N - 2}$ is the critical Sobolev exponent, $\mathcal{D}^{1,2}(\mathbb{R}^N) \coloneqq \overline{C_0^\infty(\mathbb{R}^N)}^{\|\nabla \cdot\|_{L^2(\mathbb{R}^N)}}$, and $a(x) \in C(\mathbb{R}^N,\mathbb{R})$ satisfies the following conditions.

\begin{itemize}

\item[(a1)] $0 < a(x) \le a(0) \quad (x \in \mathbb{R}^N)$.

\item[(a2)] $\displaystyle a\left(-\frac{x}{|x|^2}\right) = a(x) \quad (x \in \mathbb{R}^N)$.

\end{itemize}
Condition {\rm (a2)} means that $a(x)$ is invariant under the map $x \mapsto -\frac{x}{|x|^2}$, 
which is the composition of the inversion $x\mapsto \frac{x}{|x|^2}$ and the reflection $x\mapsto -x$. 
Under (a2), $a(x)$ necessarily satisfies
\begin{equation*}
	\lim_{|x| \to \infty}a(x)=a(0).
\end{equation*}

For $1 \le p < \infty$, we use the following notation.
\begin{align*}
&\|u\|_p = \left(\int_{\mathbb{R}^N} |u|^p \, dx\right)^{\frac{1}{p}} , \quad 
\|u\|_{\infty} = \operatorname*{ess\,sup}_{x \in \mathbb{R}^N} |u(x)| ,\\
&(u,v)_{\mathcal{D}^{1,2}} = \int_{\mathbb{R}^N} \nabla u \cdot \nabla v \, dx , \quad
\|u\|_{\mathcal{D}^{1,2}} = (u,u)_{\mathcal{D}^{1,2}}^{\frac{1}{2}} .
\end{align*}

The equation \eqref{Pa} has a variational structure, and the corresponding functional is defined by
\begin{equation*}
	I(u) = \frac{1}{2} \int_{\mathbb{R}^N} |\nabla u|^2 \, dx - \frac{1}{2^*} \int_{\mathbb{R}^N} a(x) |u|^{2^*} \, dx 
	\quad (u \in \mathcal{D}^{1,2}(\mathbb{R}^N)). 
\end{equation*}
Our goal is to obtain solutions of \eqref{Pa} as critical points of the functional $I$.
As is well known, if the following infimum $c$ is attained, then a suitable rescaling of a minimizer is a critical point of $I$.
\begin{equation}\label{1.1}
	c = \frac{1}{N} \inf_{u \in \mathcal{D}^{1,2}(\mathbb{R}^N) \setminus \{0\}}
	\left(\frac{\int_{\mathbb{R}^N} |\nabla u|^2 \, dx}{\left(\int_{\mathbb{R}^N} a(x) |u|^{2^*} \, dx\right)^{\frac{2}{2^*}}}\right)^{\frac{N}{2}}.
\end{equation}
However, the lack of compactness of the embedding $\mathcal{D}^{1,2}(\mathbb{R}^N) \hookrightarrow L^{2^*}(\mathbb{R}^N)$ makes 
it difficult to establish the compactness of minimizing sequences for this problem. 
Various assumptions of $a(x)$ have been proposed to overcome this lack of compactness (e.g., \cite{AAP, Catrina--Wang, Li1, Li2} and the references therein). 
For example, when $a(x) > 1$ and $\lim_{|x|\to \infty}a(x)=1$, 
compactness of minimizing sequences can be recovered by the concentration-compactness argument \cite{P.L.Lions1, P.L.Lions2}. 
In this argument, it is crucial to show that $c < \frac{1}{N}S^{\frac{N}{2}}$, where $S$ is the best Sobolev constant defined by
\begin{equation*}
	S = \inf_{u \in \mathcal{D}^{1,2}(\mathbb{R}^N) \setminus \{0\}} \frac{\int_{\mathbb{R}^N} |\nabla u|^2 \, dx}{\left(\int_{\mathbb{R}^N} |u|^{2^*} \, dx\right)^{\frac{2}{2^*}}}.
\end{equation*} 
In contrast, assumptions (a1)--(a2) imply $a(x)\le \lim_{|x|\to \infty}a(x)$. 
In the model case where $a(x) < 1$ and $\lim_{|x|\to \infty}a(x)=1$, 
we have $c > \frac{1}{N} S^{\frac{N}{2}}$, and the recovery of compactness becomes substantially more delicate.
In the simplest case where $a(x) \equiv  1$, the equation $(\mathcal{P}_1)$ is invariant under conformal transformations of $\mathbb{R}^N$.
Under stereographic projection, $(\mathcal{P}_1)$ can be transformed into the following equation on the unit sphere $\mathbb{S}^N$:
\begin{equation} \label{1.2}
	-\Delta_{\mathbb{S}^N} u + \mu u = |u|^{2^*-2} u \quad \text{in } \mathbb{S}^N,
\end{equation}
where $\mu > 0$ is a constant. Moreover, this transformed equation \eqref{1.2} remains invariant under conformal transformations of $\mathbb{S}^N$ \cite{Ding}. 
When $a(x) \equiv 1$, the Talenti function
\begin{equation*}
 U(x) = [N (N - 2)]^{\frac{N-2}{4}} \left(\frac{1}{1+|x|^2}\right)^{\frac{N-2}{2}} 
\end{equation*}
attains the infimum in \eqref{1.1}, and under the stereographic projection, $U$ corresponds to a constant solution of equation \eqref{1.2} on $\mathbb{S}^N$ \cite{Talenti}.
Thus, in the case $a(x)\equiv 1$, the loss of compactness can be overcome through the strong conformal symmetry of the problem \cite{Clapp, Ding}. 
This raises the following natural question:
Can we recover compactness assuming only invariance under a single conformal transformation?

Among conformal transformations, we focus on the Kelvin transform $K$. 
It is defined for $u \in C_0^\infty(\mathbb{R}^N \setminus \{0\})$ by
\begin{align}\label{1.3}
(Ku)(x) = \frac{1}{|x|^{N-2}} u \left( -\frac{x}{|x|^2} \right).
\end{align}
Since $C_0^\infty(\R^N \setminus\{0\})$ is dense in $\mathcal{D}^{1,2}(\R^N)$,
$K$ extends uniquely to a bounded linear operator on $\mathcal{D}^{1,2}(\mathbb{R}^N)$.
The Kelvin transform $K$ plays a particularly important role among conformal transformations 
because it exchanges the origin and infinity and has no fixed points even when regarded as a conformal transformation of $\mathbb{S}^N$.
In contrast, although translations on $\mathbb{R}^N$ have no fixed points in $\mathbb{R}^N$, they fix the point at infinity when regarded as conformal transformations of $\mathbb{S}^N$.
This suggests that invariance under the Kelvin transform alone may be sufficient to recover compactness. 

\begin{rmk}
The Kelvin transform is typically defined as
\begin{align*}
	(\widetilde{K}u)(x) = \frac{1}{|x|^{N-2}} u \left( \frac{x}{|x|^2} \right).
\end{align*}
However, in this case, the inversion $x\mapsto \frac{x}{|x|^2}$ fixes every point on the unit sphere, which is inconvenient for our purposes. 
For this reason, we adopt the definition \eqref{1.3} throughout the paper.
\end{rmk}

Motivated by this observation, we focus on the case where (a1)--(a2) hold and the functional $I$ is invariant under the Kelvin transform:
\begin{equation*}
	I(Ku) = I(u) \quad \text{for all } u \in \mathcal{D}^{1,2}(\mathbb{R}^N). 
\end{equation*}
We define the subspace of functions invariant under the Kelvin transform by
\[ \mathcal{D}_K \coloneqq \{u \in \mathcal{D}^{1,2}(\mathbb{R}^N) \mid Ku = u \}. \]
A function $u\in\mathcal D_K$ is called Kelvin--invariant.
We consider the following minimization problem on $\mathcal{D}_K$.
\begin{equation*}
	c_K = \frac{1}{N} \inf_{u \in \mathcal{D}_K \setminus \{0\}} 
	\left(\frac{\int_{\mathbb{R}^N} |\nabla u|^2 \, dx}
	{\left(\int_{\mathbb{R}^N} a(x) |u|^{2^*} \, dx\right)^{\frac{2}{2^*}}}\right)^{\frac{N}{2}}. 
\end{equation*}
If $c_K$ is attained, then a suitable rescaling of a minimizer yields a critical point of $I|_{\mathcal{D}_K}$.
By Palais' principle of symmetric criticality \cite{Palais}, this critical point is also a critical point of $I$.
Our main result is the following.

\begin{thm}
\label{thm 1.2}
Suppose that (a1)--(a2) hold. Moreover, suppose one of the following conditions:
\begin{itemize}

\item[(a3)] $\displaystyle \lim_{|x| \to 0}|x|^{-(N-2)}(a(x)-a(0))=0$. 
\item[(a4)] $\displaystyle \int_{|x| \le 1}\frac{a(x)-2^{-\frac{2}{N-2}}a(0)}{(1+|x|^2)^N} \, dx > 0$.

\end{itemize}
Then $c_K$ is attained and \eqref{Pa} has a Kelvin--invariant positive solution.
\end{thm}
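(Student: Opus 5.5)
The plan is a variational argument on $\mathcal{D}_K$: find the threshold energy below which Palais--Smale sequences of $I|_{\mathcal{D}_K}$ are compact, and show that $c_K$ lies strictly below it under (a3) or (a4).

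\textbf{Step 1: Kelvin-invariant Palais--Smale sequences.} Take a minimizing sequence for $c_K$, normalize it, and use Ekeland's principle to obtain a Palais--Smale sequence $(u_n)\subset\mathcal{D}_K$ for $I|_{\mathcal{D}_K}$ at level $c_K$. By the principle of symmetric criticality, this is also a Palais--Smale sequence for $I$. I would then apply the profile decomposition adapted to the symmetry $K$ that the paper announces. It gives
\[
u_n = u_0 + \sum_j g_n^{(j)} w^{(j)} + o(1),
\]
where the $g_n^{(j)}$ are dilations and translations. Because $u_n = Ku_n$, the profiles come in pairs: a bubble concentrating at $x_0$ at scale $\lambda_n$ is paired with its Kelvin image, concentrating at $-x_0/|x_0|^2$ (or at the origin/infinity). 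A single bubble can be its own image only if it is $K$-invariant. That forces the bubble to live at unit scale around a Kelvin-symmetric configuration, which cannot escape to infinity, so such a profile is not a genuine loss of compactness.

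\textbf{Step 2: energy threshold.} Each escaping bubble concentrates either at a point $x_0$, where its energy is at least $\frac1N S^{N/2} a(x_0)^{-(N-2)/2}\ge \frac1N S^{N/2}a(0)^{-(N-2)/2}$ by (a1), or at $0$ or $\infty$. Since bubbles come in Kelvin pairs, noncompactness costs at least
\[
c^* := \frac{2}{N} S^{N/2} a(0)^{-\frac{N-2}{2}}.
\]
So if $c_K < c^*$, every profile except $u_0$ vanishes, $u_n\to u_0$ strongly, and $c_K$ is attained. Replacing $u$ by $|u|$ keeps it in $\mathcal{D}_K$ and leaves the quotient unchanged, so the minimizer may be taken nonnegative. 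The strong maximum principle then gives a positive solution.

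\textbf{Step 3: the strict inequality $c_K<c^*$.} This is the main obstacle. Under (a4), take the invariant test function $U$ itself; note that $KU=U$. Splitting $\int a U^{2^*}$ over $|x|\le1$ and $|x|\ge1$ and using (a2), the two halves are equal, so
\[
\int a U^{2^*}=2\int_{|x|\le1} a U^{2^*}.
\]
Comparing the quotient for $U$ with the two-bubble level $2^{2/N}S\,a(0)^{-2/2^*}$ reduces to
\[
\int_{|x|\le1} a U^{2^*} > 2^{-2/(N-2)}a(0)\int_{|x|\le1}U^{2^*},
\]
which is exactly (a4), since $U^{2^*}\propto(1+|x|^2)^{-N}$. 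Under (a3), take instead $u_\varepsilon = U_\varepsilon + KU_\varepsilon$, with $U_\varepsilon$ a Talenti bubble concentrated at $0$ at scale $\varepsilon$ (so $KU_\varepsilon$ sits at infinity). Expand the quotient: the interaction term $\int U_\varepsilon^{2^*-1}KU_\varepsilon\sim c\,\varepsilon^{N-2}$ gives a gain of order $\varepsilon^{N-2}$. By (a3) and the change of variables $x=\varepsilon y$, the loss $\int (a(0)-a)U_\varepsilon^{2^*}$ is $o(\varepsilon^{N-2})$. Checking that the interaction gain really has the favourable sign, as in Bahri--Coron two-bubble expansions, and that the error terms are controlled, is the delicate computation; I would carry it out carefully with the explicit form of $U$.
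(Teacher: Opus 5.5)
Your overall strategy is the paper's. Theorem~\ref{thm 1.8}, through the compatibility condition of Lemma~\ref{lem 2.3}, gives Kelvin-paired profiles and excludes self-paired bubbles. The energy identity $c_K=I(w^{(0)})+2\sum_n I_{a^{(n)}}(w^{(n)})$ then gives the threshold $\frac2NS^{N/2}$ (normalizing $a(0)=1$). The test functions are $U_1$ under (a4) and $W_\varepsilon=U_\varepsilon+U_{1/\varepsilon}$ under (a3), and your (a4) computation is exactly the paper's.

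The gap is case (a3). The step you postpone, checking that the interaction has the favourable sign, is the whole content of the estimate. It is not automatic, because the interaction also enlarges the numerator. Testing $-\Delta U_\varepsilon=U_\varepsilon^{2^*-1}$ against $U_{1/\varepsilon}$ gives
\[
\|\nabla W_\varepsilon\|_2^2=2S^{\frac N2}+2\int_{\mathbb{R}^N}U_\varepsilon^{2^*-1}U_{\frac1\varepsilon}\,dx=2S^{\frac N2}+2A\varepsilon^{N-2}+o(\varepsilon^{N-2}),
\]
so $\|\nabla W_\varepsilon\|_2^N$ grows by the relative amount $\frac{NA}{2S^{N/2}}\varepsilon^{N-2}$.

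Two natural lower bounds for the denominator both fail:
\begin{itemize}
\item With superadditivity alone, $\int aW_\varepsilon^{2^*}\ge\int a\bigl(U_\varepsilon^{2^*}+U_{1/\varepsilon}^{2^*}\bigr)=2S^{N/2}+o(\varepsilon^{N-2})$, and the resulting upper bound for $c_K$ lies above $\frac2NS^{N/2}$.
\item If you keep the cross terms with coefficient exactly $\frac{2^*}{2}$, the first-order terms cancel, since $\frac{N-2}{2}\cdot\frac{2^*}{2}=\frac N2$.
\end{itemize}
What you actually need is
\[
\int_{\mathbb{R}^N}aW_\varepsilon^{2^*}\,dx\ \ge\ 2\int_{\mathbb{R}^N}aU_\varepsilon^{2^*}\,dx+2C\int_{\mathbb{R}^N}aU_\varepsilon^{2^*-1}U_{\frac1\varepsilon}\,dx\quad\text{with } C>\frac{2^*}{2}=\frac{N}{N-2}.
\]

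The paper obtains this from the pointwise inequality $(s+t)^p\ge s^p+t^p+C_p(s^{p-1}t+st^{p-1})$ for $s,t\ge0$, with a constant $C_p>\frac p2$ whenever $p>2$ (Lemma~\ref{lem 3.4}). It combines this with (a2) and $KU_\varepsilon=U_{1/\varepsilon}$, which give $\int aU_{1/\varepsilon}^{2^*}=\int aU_\varepsilon^{2^*}$ and $\int aU_\varepsilon U_{1/\varepsilon}^{2^*-1}=\int aU_\varepsilon^{2^*-1}U_{1/\varepsilon}$.

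You also need the weighted cross term $\int aU_\varepsilon^{2^*-1}U_{1/\varepsilon}=A\varepsilon^{N-2}+o(\varepsilon^{N-2})$, where $A=U(0)\|U\|_{2^*-1}^{2^*-1}$. This follows from continuity of $a$ at $0$ and dominated convergence after $x=\varepsilon y$. Your bound on $\int(a(0)-a)U_\varepsilon^{2^*}$ alone does not give it.

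The expansions then combine to
\[
c_K\le\frac2NS^{\frac N2}\Bigl(1-\frac{(N-2)A}{2S^{N/2}}\Bigl(C_{2^*}-\frac{N}{N-2}\Bigr)\varepsilon^{N-2}\Bigr)+o(\varepsilon^{N-2}),
\]
which is strictly below $\frac2NS^{N/2}$ for small $\varepsilon$. A genuine region-by-region Bahri--Coron expansion, which yields coefficient $2^*$ in place of $C_{2^*}$, would also close the argument. As written, however, your proposal does not establish the strict inequality $c_K<\frac2NS^{N/2}$ in case (a3), and that inequality is what rules out $n_0\ge1$.
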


Under (a2), the condition (a3) also implies $\lim_{|x| \to \infty}|x|^{N-2}(a(x)-a(0))=0$, 
so that (a3) imposes a flatness condition at both the origin and infinity. 
On the other hand, condition (a4) arises naturally from the energy estimate used in the proof.
In what follows, we may assume $a(0) = 1$ without loss of generality. 

\medskip

A possible approach to Theorem \ref{thm 1.2} is to transfer the problem to $\mathbb{S}^N$ via stereographic projection 
and derive the result by extending the results of Hebey and Vaugon \cite{Hebey,Hebey--Vaugon} to equations with variable coefficients. 
Instead, we establish the existence result directly in $\R^N$, avoiding the additional geometric analysis on the sphere.
This approach requires a refinement of the abstract profile decomposition developed in \cite{Tintarev07} incorporating the Kelvin transform.
To this end, we first recall the framework of the abstract profile decomposition developed in \cite{Tintarev07}.
Let $H$ be a Hilbert space.
We denote by $\mathcal{B}(H)$ and $\mathcal{U}(H)$ the sets of bounded linear operators and unitary operators on $H$, respectively.

\begin{dfn}[Set of dislocations]
Let $D\subset \mathcal{U}(H)$. 
We say that $D$ is a set of dislocations if every sequence $(g_k) \subset D$ with 
$g_k \not \rightharpoonup 0$ weakly admits a subsequence converging strongly.
\end{dfn}

\begin{dfn}[$D$-weak convergence]
Let $D \subset \mathcal{U}(H)$ and $(u_k) \subset H$. 
We say that $(u_k)$ converges $D$-weakly to $u \in H$ 
if $g_k^*(u_k-u) \rightharpoonup 0$ weakly in $H$ for any sequence $(g_k) \subset D$.
In this case, we write $u_k \overset{D}{\rightharpoonup} u$.
\end{dfn}

\begin{rmk} 
\begin{itemize}

\item[(i)]  Recall that a sequence $(g_k)\subset \mathcal{B}(H)$ converges strongly to $g$ if $g_k u \to gu$ strongly in $H$ for every $u\in H$, 
and converges weakly to $g$ if $g_k u \rightharpoonup gu$ weakly in $H$ for every $u\in H$.

\item[(ii)] For extensions of the profile decomposition to bounded sequences in Banach spaces, we refer to \cite{OkumuraI, OkumuraII}.

\end{itemize}
\end{rmk}

In the classical critical elliptic setting,
$H=\mathcal{D}^{1,2}(\mathbb{R}^N)$, and the set $D$ is given by the group generated by translations and dilations.
In this framework, profile decomposition is used to classify Palais--Smale sequences of the associated energy functional, 
and consequently to recover compactness modulo the symmetries.
In the present setting, however, the Kelvin transform $K$ is not covered by the existing framework of profile decomposition, 
since it does not belong to the dislocation set $D$. 
We therefore refine the abstract profile decomposition by introducing the following compatibility condition between $D$ and $K$.

\begin{dfn}[Compatible with $K$]
Let $D \subset \mathcal{U}(H)$ and $K \in \mathcal{U}(H)$. 
We say that $D$ is compatible with $K$ 
if $g_k^* K g_k \rightharpoonup 0$ weakly 
whenever $(g_k) \subset D$ satisfies $g_k \rightharpoonup  0$ weakly.
\end{dfn}

The following elementary example illustrates this notion.

\begin{rmk}
A simple example of pairs $(D,K)$ satisfying the compatibility condition is given by translations and reflections in $\R^N$.
Let $H=H^1(\mathbb{R}^N)$.
If $D$ is the family of translations $(\tau_y u)(x):=u(x-y)$ for $y\in \R^N$
and $K$ is the reflection with respect to the origin $(Ku)(x):=u(-x)$, then $D$ is compatible with $K$.
In the setting of the present paper, $D$ is the family of translations and dilations, while $K$ is the Kelvin transform. 
In this case, $D$ is also compatible with $K$ (see Lemma~\ref{lem 2.3}).
\end{rmk}

The following theorem is the second main result of this paper.
It provides an abstract refinement of the profile decomposition developed in \cite{Tintarev07} incorporating the Kelvin transform.
Here and below, if $n_0=\infty$, the notation $\{0,1,\dots,n_0\}$ stands for $\mathbb N\cup\{0\}$.

\begin{thm}
\label{thm 1.8}
Let $H$ be a Hilbert space endowed with an inner product $(\cdot,\cdot)$ and the associated norm $\|\cdot\|$.
Let $K \in \mathcal{U}(H)$ satisfy $K^2 = id$, 
and let $D \subset \mathcal{U}(H)$ be a set of dislocations compatible with $K$. 
Set $H_K := \{u \in H \mid Ku = u\}$.
If a sequence $(u_k) \subset H_K$ is bounded in $H$, 
then up to a subsequence, there exist $n_0 \in \mathbb{N} \cup \{0, \infty\}$, $w^{(0)} \in H_K$, $g_k^{(0)} = id$, 
and for each $n  \in \{ 1, 2, \dots, n_0 \}$, there exist $w^{(n)} \in H \setminus \{0\}$ and $ (g_k^{(n)}) \subset D$ such that 
\begin{align}
\label{1.4}
& {g_k^{(n)}}^* u_k \rightharpoonup w^{(n)} \text{ weakly in } H \text{ for all } n\in \{0,1,\dots,n_0\},\\
\label{1.5}
& {g_k^{(n)}}^* g_k^{(m)} \rightharpoonup 0 \text{ weakly for all } n,m\in \{0,1,\dots,n_0\} \text{ with } n\neq m, \\
\label{1.6}
& {g_k^{(n)}}^* K g_k^{(m)} \rightharpoonup 0 \text{ weakly for all } n,m\in \{1,\dots,n_0\},\\
\label{1.7}
& \|w^{(0)}\|^2 + 2 \sum_{n=1}^{n_0} \|w^{(n)}\|^2 \le \limsup_{k\to\infty} \|u_k\|^2.
\end{align}
Moreover, if $n_0 < \infty$, then
\begin{align}
\label{1.8}
u_k - w^{(0)} - \sum_{n=1}^{n_0} (g_k^{(n)}w^{(n)} + Kg_k^{(n)}w^{(n)}) \overset{D}{\rightharpoonup} 0.
\end{align}
\end{thm}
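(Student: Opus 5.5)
We follow the iterative extraction scheme of Tintarev's abstract profile decomposition \cite{Tintarev07}, modified so that every profile is extracted together with its Kelvin image. First pass to a subsequence with $u_k \rightharpoonup w^{(0)}$ in $H$. Since $K$ is unitary, $K^*=K^{-1}=K$ is weakly continuous, so $Kw^{(0)}=\text{w-lim}\,Ku_k=w^{(0)}$ and $w^{(0)}\in H_K$. Set $v_k^{(0)}:=u_k-w^{(0)}$, which lies in $H_K$ and converges weakly to $0$. We then proceed inductively. Suppose $v_k^{(n-1)} := u_k - w^{(0)} - \sum_{m=1}^{n-1}(g_k^{(m)}w^{(m)}+Kg_k^{(m)}w^{(m)})$ has been constructed. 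It lies in $H_K$, because $K^2=id$ makes each pair $g_k^{(m)}w^{(m)}+Kg_k^{(m)}w^{(m)}$ $K$-invariant. If $v_k^{(n-1)}\overset{D}{\rightharpoonup}0$, we stop with $n_0=n-1$. Otherwise there are $(g_k)\subset D$ and, after a subsequence, $w^{(n)}\neq 0$ with $g_k^* v_k^{(n-1)}\rightharpoonup w^{(n)}$. Among such choices we pick one with $\|w^{(n)}\|\ge \frac12 \sup\{\|w\|\}$, the supremum taken over all possible $D$-weak limits. This is the standard device that forces the profiles to capture the whole $D$-weak defect.

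\textbf{Asymptotic orthogonality.} We must show that the chosen $g_k=:g_k^{(n)}$ satisfies $g_k\rightharpoonup 0$, \eqref{1.5}, and \eqref{1.6}.
\begin{itemize}
\item \emph{$g_k\rightharpoonup 0$.} If $g_k\not\rightharpoonup 0$, the dislocation property gives a strongly convergent subsequence $g_k\to g$. Then $g_k^*v_k\rightharpoonup g^*\cdot 0$, hence $w^{(n)}=0$ for $n=1$. For general $n$ we argue likewise, using that $g_k^{(m)*}v_k^{(n-1)}\rightharpoonup 0$ for $m<n$, which holds by construction.
\item \emph{\eqref{1.5}.} This follows as in \cite{Tintarev07}. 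If $g_k^{(n)*}g_k^{(m)}\not\rightharpoonup0$, then (using that $D$ is a group-like set as in the paper's setting, or the abstract argument) one obtains $w^{(n)}=0$, a contradiction.
\item \emph{\eqref{1.6}, the new point.} For $m=n$ this is exactly the compatibility of $D$ with $K$. For $m\ne n$, suppose $g_k^{(n)*}Kg_k^{(m)}\not\rightharpoonup 0$; after a subsequence it converges to some nonzero operator. Applying $(g_k^{(m)})^*K$ to $v_k^{(n-1)}$ and using $Kv_k^{(n-1)}=v_k^{(n-1)}$, we get $g_k^{(m)*}Kv_k^{(n-1)}=g_k^{(m)*}v_k^{(n-1)}\rightharpoonup 0$. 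Transferring this through the limit operator yields $w^{(n)}=0$, a contradiction.
\end{itemize}

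\textbf{Norm estimate \eqref{1.7}.} Expand $\|v_k^{(n-1)}\|^2$. Using \eqref{1.5}, \eqref{1.6}, compatibility, and the fact that $g_k^{(n)*}v_k^{(n)}\rightharpoonup 0$ and $g_k^{(n)*}Kv_k^{(n)}\rightharpoonup0$, all cross terms vanish in the limit. Hence
\[
\|v_k^{(n-1)}\|^2=\|v_k^{(n)}\|^2+\|g_k^{(n)}w^{(n)}\|^2+\|Kg_k^{(n)}w^{(n)}\|^2+o(1)=\|v_k^{(n)}\|^2+2\|w^{(n)}\|^2+o(1).
\]
In particular $(g_k^{(n)}w^{(n)},Kg_k^{(n)}w^{(n)})=(w^{(n)},g_k^{(n)*}Kg_k^{(n)}w^{(n)})\to0$ by compatibility; this is where the factor $2$ originates. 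Iterating and passing to a diagonal subsequence gives \eqref{1.7}, including the case $n_0=\infty$.

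\textbf{Termination and \eqref{1.8}.} When $n_0<\infty$, \eqref{1.8} holds by the stopping rule. When $n_0=\infty$, \eqref{1.7} forces $\|w^{(n)}\|\to0$, and the half-supremum choice then makes the defect vanish asymptotically. That consequence is not needed here, since \eqref{1.8} is asserted only for finite $n_0$.

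The main obstacle is the orthogonality step, specifically \eqref{1.6} for $m\neq n$ and the proof that each new $g_k^{(n)}\rightharpoonup0$ remains orthogonal to all earlier $Kg_k^{(m)}$. The difficulty is to convert a nonzero weak limit of $g_k^{(n)*}Kg_k^{(m)}$ into a contradiction using only unitarity, the dislocation property, and $K$-invariance of $v_k^{(n-1)}$. The first approach is to use that $g_k^{(n)*}Kg_k^{(m)}$ is unitary and so admits weak limits along subsequences, and then to argue by testing against $w^{(n)}$.
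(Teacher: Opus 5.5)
Your skeleton matches the paper's: extract $w^{(0)}$, then repeatedly extract a profile from the $K$-invariant remainder and subtract it together with its Kelvin partner. Your telescoping identity $\|v_k^{(n-1)}\|^2=\|v_k^{(n)}\|^2+2\|w^{(n)}\|^2+o(1)$ is correct and is equivalent to the paper's Step~3. The half-supremum selection is harmless but unnecessary, since \eqref{1.8} is only claimed for $n_0<\infty$.

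The genuine gap is the step you flag yourself. You give no valid argument for \eqref{1.6} with $m\neq n$, and \eqref{1.5} for $n,m\ge 1$ is only gestured at. You do find the right identity: for $m<n$ and $v_k:=v_k^{(n-1)}\in H_K$,
\[
{g_k^{(n)}}^*v_k={g_k^{(n)}}^*Kv_k=h_k^*x_k,\qquad h_k:={g_k^{(m)}}^*Kg_k^{(n)},\quad x_k:={g_k^{(m)}}^*v_k\rightharpoonup 0 .
\]
Your plan is to pass to a weak operator limit $h_k\rightharpoonup h\neq 0$ and test against $w^{(n)}$. This cannot close the argument, because $(h_k^*x_k,\psi)=(x_k,h_k\psi)$ pairs two sequences that converge only weakly. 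For example, in $\ell^2$ let $h_k$ be the unitary exchanging $e_0$ and $e_k$. Then $h_k$ converges weakly to the nonzero orthogonal projection onto $\{e_0\}^\perp$, while $x_k=e_k\rightharpoonup 0$ and $h_k^*x_k=e_0\not\rightharpoonup 0$. Unitarity plus weak limits yields no contradiction.

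The missing ingredient is the strong-convergence mechanism you already use in your first bullet. If $h_k\not\rightharpoonup 0$, you need a subsequence with $h_k\to h$ \emph{strongly}. Then $(h_k^*x_k,\psi)=(x_k,h_k\psi)\to 0$ for every $\psi$, so $w^{(n)}=0$, a contradiction.

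This is exactly the paper's Step~2. It writes ${g_k^{(l+1)}}^*r_k^{(l+1)}=[{g_k^{(n)}}^*Kg_k^{(l+1)}]^*\,{g_k^{(n)}}^*r_k^{(l+1)}$ (and the analogue without $K$), and uses ${g_k^{(n)}}^*r_k^{(l+1)}\rightharpoonup 0$ from \eqref{2.10}. It then runs the argument of Lemma~\ref{lem 2.1} on the composite sequences ${g_k^{(n)}}^*g_k^{(l+1)}$ and ${g_k^{(n)}}^*Kg_k^{(l+1)}$.

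Note that these composites lie in $D^*D$ and $D^*KD$ rather than in $D$. So this step really uses the dislocation (strong-subsequence) property for them as well. That property is automatic for ${g_k^{(n)}}^*g_k^{(l+1)}$ when $D$ is a group. It also holds for both composites in the concrete translation--dilation--Kelvin setting, where all these operators are induced by M\"obius transformations. Compatibility cannot supply this input, since it only controls the diagonal case $m=n$, and no weak-limit argument can replace it.
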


Compared with the classical profile decomposition, 
Theorem \ref{thm 1.8} shows that nontrivial profiles appear in symmetric pairs $g_k^{(n)}w^{(n)}$ and $Kg_k^{(n)}w^{(n)}$.
This observation suggests that the energy level $\frac{2}{N}S^{\frac{N}{2}}$ is the natural compactness threshold in our setting. 
Indeed, a crucial step in the proof of Theorem \ref{thm 1.2} is to establish the estimate $c_K<\frac{2}{N}S^{\frac{N}{2}}$.

\medskip

This paper is organized as follows. 
In Section \ref{section 2}, we prove Theorem \ref{thm 1.8}. 
In Section \ref{section 3}, we establish several preliminary results and derive an energy estimate.
In Section \ref{section 4}, we complete the proof of Theorem \ref{thm 1.2}.

\section{Profile decomposition}
\label{section 2}
In this section, we prove Theorem 1.8, which provides the abstract profile decomposition underlying the proof of Theorem 1.2.

\begin{lem}
\label{lem 2.1}
Let $H$ be a Hilbert space endowed with an inner product $(\cdot,\cdot)$ and the associated norm $\|\cdot\|$. Let $K \in \mathcal{U}(H)$ and $D \subset \mathcal{U}(H)$ be a set of dislocations compatible with $K$. 
Suppose that $(u_k)\subset H_K$ and $(g_k)\subset D$ satisfy
\begin{align*}
& u_k \rightharpoonup 0, \quad g_k^* u_k \rightharpoonup w \neq 0 \quad \text{weakly in } H.
\end{align*}
Then we have
\[ g_k \rightharpoonup 0, \quad g_k^* K g_k \rightharpoonup 0 \quad \text{weakly}. \]
\end{lem}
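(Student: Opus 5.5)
First I show $g_k \rightharpoonup 0$ weakly, arguing by contradiction. Suppose $(g_k)$ has no subsequence converging weakly to $0$. Then, along a subsequence, $g_k \not\rightharpoonup 0$. Since $D$ is a set of dislocations, a further subsequence converges strongly to some $g \in \mathcal{B}(H)$. The strong limit of unitaries is an isometry, so $\|gv\|=\|v\|$ for all $v$.

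The key observation is that $g_k^* u_k \rightharpoonup g^* \cdot 0 = 0$. To see this, fix $v\in H$ and write
\[
(g_k^* u_k, v) = (u_k, g_k v) = (u_k, gv) + (u_k, g_k v - gv).
\]
The first term tends to $0$ because $u_k\rightharpoonup 0$. The second term is bounded by $\sup_k\|u_k\|\,\|g_kv-gv\|\to 0$, using that weakly convergent sequences are bounded. Hence $g_k^*u_k\rightharpoonup 0$, contradicting $w\neq 0$.

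Every subsequence of $(g_k)$ therefore has a further subsequence along which the hypotheses still hold. Applying the same argument to it shows that it cannot stay away from weak convergence to $0$. Because the unit ball of $\mathcal{B}(H)$ is compact in the weak operator topology, this gives $g_k\rightharpoonup 0$ for the whole sequence. More simply, if $g_k \not\rightharpoonup 0$, some subsequence satisfies $|(g_kv,z)|\ge\varepsilon$ for fixed $v,z$. That subsequence has no subsequence converging weakly to $0$, and the dislocation property then yields a strongly convergent sub-subsequence, giving the same contradiction.

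With $g_k\rightharpoonup 0$ established, the compatibility of $D$ with $K$ immediately gives $g_k^*Kg_k\rightharpoonup 0$ weakly. Note that $Ku_k=u_k$ is not even needed for this step.

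The only delicate point is the passage from subsequences to the full sequence in the first step, which the subsequence-of-subsequence argument above handles. The hypothesis $u_k\in H_K$ seems superfluous here; it presumably matters in later uses of the lemma.
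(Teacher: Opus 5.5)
Your argument is correct. Its skeleton matches the paper's: a contradiction via the dislocation property, then compatibility for $g_k^*Kg_k$. The key step, however, runs in the opposite direction. After extracting $g_k\to g$ strongly, the paper asserts $g\in\mathcal{U}(H)$ and $g_k^*\to g^*$ strongly. It then computes $(u_k,\varphi)=(g_k^*u_k,g_k^*\varphi)\to(gw,\varphi)$, obtaining $u_k\rightharpoonup gw\neq0$ and contradicting $u_k\rightharpoonup0$. You instead move $g_k$ onto the test vector, $(g_k^*u_k,v)=(u_k,g_kv)$. Using only the strong convergence of $g_k$ and the boundedness of $(u_k)$, you get $g_k^*u_k\rightharpoonup0$, which contradicts $w\neq0$.

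Your version needs no information about the limit $g$ (not unitarity, not even the isometry property you mention), and no convergence of adjoints. This is a genuine advantage at the abstract level. The paper's step $g_k^*\to g^*$ strongly is valid when $g$ is unitary, since $\|g_k^*g\psi-\psi\|=\|g\psi-g_k\psi\|$. But the paper's definition of a set of dislocations only supplies some strong limit, which need not be unitary. For example, the cyclic permutation unitaries $U_n$ of $e_0,\dots,e_n$ in $\ell^2$ converge strongly to the unilateral shift, while $U_n^*e_0=e_n$ has no strong limit. In the application $D=G$ the limit is some $T_{y,\lambda}\in G$ by Lemma~\ref{lem 2.2}, so the paper's route is fine there.

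Two minor remarks. The sub-subsequence discussion, and the appeal to weak-operator compactness, are unnecessary: once $g_k\not\rightharpoonup0$, the definition of a set of dislocations applies directly to $(g_k)$ itself. You are also right that $u_k\in H_K$ plays no role; the paper's proof does not use it either.
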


\begin{proof}
Assume that $g_k \not\rightharpoonup 0$ weakly. Since $D$ is a set of dislocations, there exists $g \in \mathcal{U}(H)$ such that $g_k \to g$ strongly up to a subsequence. Moreover, $g_k^* \to g^*$ strongly because each $g_k$ is unitary. Let $\varphi \in H$. Since $g_k^* u_k \rightharpoonup w$ weakly in $H$ and $g_k^* \varphi \to g^*\varphi$ in $H$, it follows that
\[ (u_k, \varphi) = (g_k^* u_k, g_k^* \varphi) \to (w, g^*\varphi) = (gw, \varphi). \]
Therefore, $u_k \rightharpoonup gw$ weakly in $H$. Since $g$ is unitary and $w \neq 0$, we have $gw \neq 0$, which contradicts the assumption that $u_k \rightharpoonup 0$ weakly in $H$. Hence $g_k \rightharpoonup 0$ weakly.
The second assertion follows immediately, since $D$ is compatible with $K$.
\end{proof}

\begin{proof}[Proof of Theorem \ref{thm 1.8}]
Since $(u_k)$ is bounded in $H$, up to a subsequence, there exists $w^{(0)} \in H$ such that
\[ u_k \rightharpoonup w^{(0)} \quad \text{ weakly in } H. \]
Since $H_K$ is a closed subspace of $H$, it is also weakly closed. Therefore, $w^{(0)} \in H_K$. Moreover, we set $g_k^{(0)} = id$. We divide the proof into several steps.

\medskip

\noindent
\textbf{Step 1.} \textit{Set 
\[ r_k^{(1)} = u_k - w^{(0)}. \]
Then, either (A) or (B) holds.
\begin{itemize}
\item[(A)] If $r_k^{(1)} \overset{D}{\rightharpoonup} 0$, then \eqref{1.4}--\eqref{1.5} and \eqref{1.8} hold with $n_0 = 0$.
\item[(B)] If $r_k^{(1)} \not\overset{D}{\rightharpoonup} 0$, then up to a subsequence, there exist $w^{(1)} \in H \setminus \{0\}$ and $(g_k^{(1)}) \subset D$ such that 
\begin{align}
\label{2.1}
& {g_k^{(1)}}^* u_k \rightharpoonup w^{(1)} \quad \text{weakly in } H,\\
\label{2.2}
& g_k^{(1)} \rightharpoonup 0 \quad \text{weakly}.\\
\label{2.3}
& {g_k^{(1)}}^* K g_k^{(1)} \rightharpoonup 0\quad \text{weakly}.
\end{align}
\end{itemize}
}

\begin{proof}[Proof of Step 1]
Since (A) is obvious, we only prove (B).
Note that $r_k^{(1)}\in H_K$ and $r_k^{(1)} \rightharpoonup 0$ weakly in $H$. 
Suppose that $r_k^{(1)} \not\overset{D}{\rightharpoonup} 0$. 
Then, there exists $(g_k^{(1)}) \subset D$ such that
\begin{equation*}
 {g_k^{(1)}}^* r_k^{(1)} \not\rightharpoonup 0 \quad \text{weakly in } H. 
\end{equation*}
Since $({g_k^{(1)}}^* r_k^{(1)} )$ is bounded in $H$, up to subsequence, there exists $w^{(1)} \in H \setminus \{0\}$ such that
\begin{equation*}
 {g_k^{(1)}}^* r_k^{(1)} \rightharpoonup w^{(1)} \quad \text{weakly in } H. 
\end{equation*}
Thus, applying Lemma \ref{lem 2.1} with $u_k = r_k^{(1)}$ and $g_k = g_k^{(1)}$, we obtain \eqref{2.2}--\eqref{2.3}.
Moreover, 
\[ {g_k^{(1)}}^* u_k = {g_k^{(1)}}^* r_k^{(1)} + {g_k^{(1)}}^* w^{(0)} \rightharpoonup w^{(1)} \quad \text{weakly in } H, \]
which yields \eqref{2.1}.
\end{proof}

We now argue inductively.

\medskip

\noindent
\textbf{Step 2.} \textit{For $l \ge 1$, we assume that there exist $w^{(0)} \in H_K$, $g_k^{(0)} = id$, 
and for each $n\in \{1,\dots,n_0\}$, there exists $w^{(n)} \in H \setminus \{0\}$ and $(g_k^{(n)}) \subset D$ such that
\begin{align}
\label{2.4}
& {g_k^{(n)}}^* u_k \rightharpoonup w^{(n)} \text{ weakly in } H \text{ for all } n\in \{0,1,\cdots,l\}.\\
\label{2.5}
& {g_k^{(n)}}^* g_k^{(m)} \rightharpoonup 0\text{ weakly for all } n,m\in \{0,1,\dots,l\} \text{ with } n\neq m,  \\
\label{2.6}
& {g_k^{(n)}}^* K g_k^{(m)} \rightharpoonup 0 \text{ weakly for all } n,m\in \{1,\dots,l\}.
\end{align}
Set
\begin{equation*}
	r_k^{(l+1)} = u_k - w^{(0)} - \sum_{n=1}^l (g_k^{(n)}w^{(n)} + Kg_k^{(n)} w^{(n)}). 
\end{equation*}
Then, either (A) or (B) holds.
\begin{itemize}
\item[(A)] If $r_k^{(l+1)} \overset{D}{\rightharpoonup} 0$, then \eqref{1.4}--\eqref{1.6} and \eqref{1.8} hold with $n_0 = l$.
\item[(B)] If $r_k^{(l+1)} \not\overset{D}{\rightharpoonup} 0$, then up to a subsequence, 
there exist $w^{(l+1)} \in H \setminus \{0\}$ and $(g_k^{(l+1)}) \subset D$ such that
\begin{align}
\label{2.7}
& {g_k^{(l+1)}}^* u_k \rightharpoonup w^{(l+1)} \text{ weakly in } H,\\
\label{2.8}
& {g_k^{(n)}}^* g_k^{(l+1)} \rightharpoonup 0 \text{ weakly for all } n\in\{0,1,\dots,l\},\\
\label{2.9}
& {g_k^{(n)}}^*  K g_k^{(l+1)} \rightharpoonup 0 \text{ weakly for all } n\in\{1,\dots,l+1\}.
\end{align}
\end{itemize}
}
\begin{proof}[Proof of Step 2]
Since (A) is obvious, we only prove (B). 
Using \eqref{2.4}--\eqref{2.6}, we have
\begin{equation}\label{2.10}
{g_k^{(n)}}^* r_k^{(l+1)} \rightharpoonup 0 \quad \text{weakly in } H \text{ for all } n \in \{0, 1, \dots, l\}. 
\end{equation}
Furthermore, since $u_k, w^{(0)} \in H_K$ and $K^2 = id$, we have $r_k^{(l+1)}\in H_K$.
Suppose that $r_k^{(l+1)} \not\overset{D}{\rightharpoonup} 0$. Then, there exists $(g_k^{(l+1)}) \subset D$ such that
\[ {g_k^{(l+1)}}^* r_k^{(l+1)} \not\rightharpoonup 0 \quad \text{weakly in } H. \]
Since $({g_k^{(l+1)}}^* r_k^{(l+1)} )$ is bounded in $H$, up to subsequence, there exists $w^{(l+1)} \in H \setminus \{0\}$ such that
\begin{equation}\label{2.11}
{g_k^{(l+1)}}^* r_k^{(l+1)} \rightharpoonup w^{(l+1)} \quad \text{weakly in } H. 
\end{equation}
Thus, applying Lemma \ref{lem 2.1} with $u_k = r_k^{(l+1)}$ and $g_k={g_k^{(l+1)}}$, we obtain
\begin{equation*}
g_k^{(l+1)} \rightharpoonup 0, \quad {g_k^{(l+1)}}^* Kg_k^{(l+1)} \rightharpoonup 0 \quad \text{weakly}.
\end{equation*}
Furthermore, \eqref{2.11} can be rewritten as
\begin{align}
& \left[{g_k^{(n)}}^*{g_k^{(l+1)}}\right]^* {g_k^{(n)}}^* r_k^{(l+1)} \rightharpoonup w^{(l+1)} \text{ weakly in } H \text{ for all } n\in\{0,1,\dots,l\}.\label{2.12}\\
& \left[{g_k^{(n)}}^* K{g_k^{(l+1)}}\right]^* {g_k^{(n)}}^* r_k^{(l+1)} \rightharpoonup w^{(l+1)} \text{ weakly in } H \text{ for all } n\in\{1,\dots,l\}.\label{2.13}
\end{align}
By \eqref{2.12}, \eqref{2.13}, and \eqref{2.10}, applying Lemma \ref{lem 2.1} with $u_k = {g_k^{(n)}}^* r_k^{(l+1)}$ and $g_k= {g_k^{(n)}}^* {g_k^{(l+1)}}$ or ${g_k^{(n)}}^* K{g_k^{(l+1)}}$,
we obtain \eqref{2.8} and  \eqref{2.9}. Moreover, 
\[ {g_k^{(l+1)}}^* u_k = {g_k^{(l+1)}}^* r_k^{(l+1)} + \sum_{n=1}^l ({g_k^{(l+1)}}^* g_k^{(n)}w^{(n)} + {g_k^{(l+1)}}^* Kg_k^{(n)} w^{(n)}) \rightharpoonup w^{(l+1)} \quad \text{weakly in } H, \]
which yields \eqref{2.7}. 
\end{proof}

If Step 2 (A) holds for some $l$, we proceed to Step 3 by setting $n_0=l$.
If Step 2 (B) holds for some $l$, using \eqref{2.8} and \eqref{2.9}, we have
\begin{align*}
& {{g_k^{(n)}}^* g_k^{(l+1)}} = ({g_k^{(l+1)}}^* g_k^{(n)})^* \rightharpoonup 0 \quad \text{ weakly for all } n \in \{0, 1, \dots, l\},\\
& {{g_k^{(n)}}^*  K g_k^{(l+1)}} = ({g_k^{(l+1)}}^* K g_k^{(n)})^* \rightharpoonup 0 \quad \text{ weakly for all } n \in \{1, \dots, l+1\}.
\end{align*}
Thus, the inductive assumptions in Step 2 hold for $l+1$ and repeat Step 2.
If Step 2 (B) holds for all $l \in \N$, then, by a diagonal argument, up to a subsequence, \eqref{2.7}--\eqref{2.9} hold for all $l \in \N$.
Therefore, \eqref{1.4}--\eqref{1.6} hold with $n_0=\infty$, and we proceed to Step 3.

\medskip

\noindent
\textbf{Step 3.} \textit{For the $n_0$ determined above, \eqref{1.7} holds. }
\begin{proof}[Proof of Step 3]
Fix $M \in \mathbb{N}$ such that $M \le n_0$. 
Set
\begin{equation*}
v_k=w^{(0)}+\sum_{n=1}^M \left(g_k^{(n)}w^{(n)}+Kg_k^{(n)}w^{(n)}\right).
\end{equation*}
Since $K$ and $g_k^{(n)}$ are unitary, using \eqref{1.4}--\eqref{1.6}, we obtain
\begin{align*}
\|v_k\|^2 & = \|w^{(0)}\|^2 + 2\sum_{n=1}^M \|w^{(n)}\|^2 + o(1), \\
(u_k,v_k) & = \|w^{(0)}\|^2 + 2\sum_{n=1}^M \|w^{(n)}\|^2 + o(1),
\end{align*}
Therefore,
\begin{align*}
0 \le \|u_k-v_k\|^2 
& =\|u_k\|^2 - 2(u_k,v_k) + \|v_k\|^2 \\
& = \|u_k\|^2-\|w^{(0)}\|^2-2\sum_{n=1}^M \|w^{(n)}\|^2+o(1).
\end{align*}
Taking $\limsup$ as $k\to\infty$, we obtain
\begin{equation*}
\|w^{(0)}\|^2 + 2\sum_{n=1}^{M} \|w^{(n)}\|^2 \le \limsup_{k\to\infty} \|u_k\|^2.
\end{equation*}
If $n_0 < \infty$, we obtain \eqref{1.7} by setting $M = n_0$. If $n_0 = \infty$, \eqref{1.7} follows by letting $M \to \infty$.
\end{proof}
Steps 1--3 complete the proof.
\end{proof}

We conclude this section by describing the framework to prove Theorem \ref{thm 1.2} applying Theorem \ref{thm 1.8}. For $y \in \mathbb{R}^N$ and $\lambda \in \R$, 
we define translation operator $\tau_y$ 
and dilation operator $\delta_\lambda$ on $\mathcal{D}^{1,2}(\R^N)$ as follows:
\begin{equation*}
	(\tau_y u)(x) \coloneqq u (x - y), \quad 
	(\delta_\lambda u)(x) \coloneqq 2^{\frac{N-2}{2}\lambda} u(2^\lambda x).
\end{equation*}
	Furthermore, the translation--dilation operator ${T}_{y,\lambda}$ is given by ${T}_{y,\lambda} := \tau_y \circ \delta_\lambda$, that is,
\begin{equation*}
	({T}_{y,\lambda}u)(x) = 2^{\frac{N-2}{2}\lambda} u(2^{\lambda}(x-y)).
\end{equation*}
A direct computation shows that ${T}^{-1}_{y,\lambda} = T_{-2^{\lambda} y,-\lambda}$ for all $y \in \mathbb{R}^N$ and $\lambda \in \R$. We set 
\[ G \coloneqq \{{T}_{y,\lambda} \mid y \in \mathbb{R}^N, \ \lambda \in \mathbb{R} \}.\]
Then, $G$ consists of unitary operators on $\mathcal{D}^{1,2}(\mathbb{R}^N)$ and isometries on $L^{2^*}(\mathbb{R}^N)$. 
Moreover, a direct calculation shows that
\begin{equation*}
\delta_\lambda \circ \tau_{y} = \tau_{2^{-\lambda} y} \circ \delta_\lambda \quad \text{ for all } y \in \mathbb{R}^N \text{ and } \lambda \in \mathbb{R}.
\end{equation*}
Therefore, setting $S_{\lambda,y} = \delta_\lambda \circ \tau_y$ for $\lambda \in \mathbb{R}$ and $y \in \mathbb{R}^N$, we have
\begin{equation*}
\label{200}
G = \{S_{\lambda,y} \mid \lambda \in \mathbb{R}, y \in \mathbb{R}^N\}.
\end{equation*}

We introduce two lemmas. The first lemma shows that $G$ is a set of dislocations.

\begin{lem}
\label{lem 2.2}
For $(y_k) \subset \mathbb{R}^N$ and $(\lambda_k) \subset \R$, the following are equivalent. 
\begin{itemize}
\item[(i)] ${T}_{y_k,\lambda_k} \rightharpoonup 0$ weakly.
\item[(ii)] $|y_k| + |\lambda_k| \to \infty.$
\end{itemize}
In particular, $G$ is a set of dislocations.
\end{lem}

\begin{proof}
First, suppose that $|y_k| + |\lambda_k| \not\to \infty$. Then, up to a subsequence, there exist $y \in \mathbb{R}^N$ and $\lambda \in \R$ such that $y_k \to y$ and $\lambda_k \to \lambda$. 
Choose $u \in C_0^\infty(\mathbb{R}^N) \setminus \{0\}$. 
Then
\begin{align*}
(T_{y_k,\lambda_k} u, T_{y,\lambda}u)_{\mathcal{D}^{1,2}} \to (u, u)_{\mathcal{D}^{1,2}} = \|u\|_{\mathcal{D}^{1,2}}^2 > 0.
\end{align*}
Therefore, ${T}_{y_k,\lambda_k} \not\rightharpoonup 0$ weakly.

Conversely, assume that $|y_k| + |\lambda_k| \to \infty$. 
By the density of $C_0^\infty(\R^N)$ in $\mathcal{D}^{1,2}(\R^N)$, it suffices to show that $(T_{y_k,\lambda_k}u, v)_{\mathcal{D}^{1,2}} \to 0$ for all $u, v \in C_0^\infty(\mathbb{R}^N)$. 
Passing to a subsequence, we may assume that either $|y_k| \to \infty$ and $(\lambda_k)$ is bounded, or $|\lambda_k|\to \infty$.
Let $u, v \in C_0^\infty(\mathbb{R}^N)$. 
If $|y_k| \to \infty$ and $(\lambda_k)$ is bounded, 
then ${\rm supp}(T_{y_k,\lambda_k}u) \cap {\rm supp}(v) = \emptyset$ for all sufficiently large $k \in \mathbb{N}$. 
Thus, $(T_{y_k,\lambda_k}u, v)_{\mathcal{D}^{1,2}} \to 0$. 
Suppose next $|\lambda_k| \to \infty$. 
Then,
\begin{equation*}
\begin{split}
(T_{y_k,\lambda_k}u, v)_{\mathcal{D}^{1,2}} &= 2^{\frac{N}{2}\lambda_k} \int_{\mathbb{R}^N} (\nabla u)(2^{\lambda_k}( x-y_k)) \cdot \nabla v(x) \, dx\\
&= 2^{-\frac{N}{2}\lambda_k} \int_{\mathbb{R}^N} \nabla u(x) \cdot (\nabla v)(2^{-\lambda_k}x+y_k) \, dx.
\end{split}
\end{equation*}
Therefore, $(T_{y_k,\lambda_k}u, v)_{\mathcal{D}^{1,2}} \to 0$. 
Since the above argument applies to every subsequence, the original sequence also converges to $0$.

Finally, we show that $G$ is a set of dislocations. Let $(T_{y_k,\lambda_k}) \subset G$ be a sequence such that $T_{y_k,\lambda_k} \not\rightharpoonup 0$ weakly. 
By the equivalence of {\rm (i)} and {\rm (ii)}, we have $|y_k| + |\lambda_k| \not\to \infty$. Up to a subsequence, we may assume $y_k \to y$ and $\lambda_k \to \lambda$ for some $y \in \R^N, \lambda \in \R$. 
Then, for $u \in C_0^\infty(\mathbb{R}^N)$, it follows that $T_{y_k,\lambda_k}u \to T_{y,\lambda}u$ in $\mathcal D^{1,2}(\mathbb R^N)$.
Since $C_0^\infty(\R^N)$ is dense in $\mathcal{D}^{1,2}(\R^N)$, we have
$T_{y_k,\lambda_k} \to T_{y,\lambda}$ strongly, which implies that $G$ is a set of dislocations.
\end{proof}

The second lemma proves that $D$ is compatible with the Kelvin transform on $\mathcal{D}^{1,2}(\mathbb{R}^N)$. 

\begin{lem}
\label{lem 2.3}
$D$ is compatible with the Kelvin transform $K$ on $\mathcal{D}^{1,2}(\mathbb{R}^N)$.
\end{lem}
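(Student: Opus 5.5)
Here $D=G$, and the claim is that whenever $T_{y_k,\lambda_k}\rightharpoonup 0$ weakly, i.e.\ $|y_k|+|\lambda_k|\to\infty$ by Lemma~\ref{lem 2.2}, we have $T_{y_k,\lambda_k}^*KT_{y_k,\lambda_k}\rightharpoonup 0$ weakly. Since $T^*_{y_k,\lambda_k}$ is unitary and $G$, $K$ are uniformly bounded, it suffices by density to show
\[
(KT_{y_k,\lambda_k}u,\,T_{y_k,\lambda_k}v)_{\mathcal D^{1,2}}\to 0
\]
for all $u,v\in C_0^\infty(\R^N\setminus\{0\})$, which is dense in $\mathcal D^{1,2}(\R^N)$. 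Every subsequence has a further subsequence in one of three regimes, so it is enough to treat each: (a) $\lambda_k\to+\infty$ (concentration), (b) $\lambda_k\to-\infty$ (spreading), (c) $\lambda_k$ bounded and $|y_k|\to\infty$.

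The common tool is a support argument. Let $\operatorname{supp}u,\operatorname{supp}v\subset\{r\le|x|\le R\}$. Then $T_{y_k,\lambda_k}v$ is supported in the annular region $A_k=y_k+2^{-\lambda_k}\{r\le|x|\le R\}$. Also $KT_{y_k,\lambda_k}u$ is supported in $\iota(A_k)$, where $\iota(x)=-x/|x|^2$, and this is a valid subset of $\R^N\setminus\{0\}$ as long as $0\notin A_k$. Whenever $\iota(A_k)\cap A_k=\emptyset$, the inner product is $0$ exactly. The key fact is that $\iota$ maps sets close to a point $p\neq0$ near $\iota(p)=-p/|p|^2\ne p$. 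It also maps small sets near $0$ far away, and far sets near $0$.

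\begin{itemize}
\item[(c)] $A_k$ lies in a ball of fixed radius around $y_k$ with $|y_k|\to\infty$. So $\iota(A_k)$ lies in a ball of radius $O(|y_k|^{-2})$ near $0$, hence the supports are disjoint for large $k$.
\item[(a)] $A_k$ is a shrinking set of diameter $O(2^{-\lambda_k})$ around $y_k$.
  \begin{itemize}
  \item[$\bullet$] If $|y_k|\ge 2R\,2^{-\lambda_k}$, then $\iota(A_k)$ sits near $\iota(y_k)$. Since $|\iota(y_k)|=1/|y_k|$ and $\iota(y_k)$ points opposite to $y_k$, the sets separate: they are separated by the hyperplane orthogonal to $y_k$ through the origin.
  \item[$\bullet$] If instead $y_k=O(2^{-\lambda_k})$, then $A_k\subset B(0,C2^{-\lambda_k})$ while $\iota(A_k)\subset\{|x|\ge c2^{\lambda_k}\}$, which are disjoint for large $k$.
  \end{itemize}
  In the first bullet the sign reversal is what matters: the antipodal reflection built into $K$ prevents $\iota$ from having a fixed point on the unit sphere. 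This is exactly why the paper uses $-x/|x|^2$.
\item[(b)] Apply the symmetric argument via $K$, or reduce to (a) using $K T_{y,\lambda}K$.
\end{itemize}

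The main obstacle is making the half-space separation clean in (a) and (b) when $y_k$ is neither small nor large relative to the scale, for instance $|y_k|\sim 2^{-\lambda_k}$ with $0$ possibly inside the hull of $A_k$. The cleanest route is to compute $KT_{y,\lambda}$ explicitly as another conformal map and conjugate. For that I would first use the identities $K\delta_\lambda=\delta_{-\lambda}K$ and $\delta_\lambda$ commuting suitably with $\tau$, to reduce $T^*_{y_k,\lambda_k}KT_{y_k,\lambda_k}$ to the form $\tau_{-z_k}K\tau_{z_k}$ composed with a dilation by $2^{2\lambda_k}$-type factors, where $z_k=2^{\lambda_k}y_k$. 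Then I treat the two parameters $|z_k|$ and $\lambda_k$: if $\lambda_k\to\pm\infty$ with $z_k$ bounded, the dilation factor diverges and the weak limit is $0$ as in Lemma~\ref{lem 2.2}. If $|z_k|\to\infty$, the operator $\tau_{-z}K\tau_z$ sends compactly supported functions to functions concentrating near $-z-z/|z|^2$, far from the support of $v$. I would expect this explicit conjugation to be the part requiring the most care.
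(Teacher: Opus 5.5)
Your closing ``conjugation'' route is, in substance, the paper's proof, and it works. With $z=2^{\lambda}y$ one has $T_{y,\lambda}=\delta_\lambda\tau_z$ (the paper's $S_{\lambda,z}$) and $T_{y,\lambda}^*KT_{y,\lambda}=\tau_{-z}\delta_{-2\lambda}K\tau_z$. For bounded $z_k$ and $|\lambda_k|\to\infty$ the pairing is $(K\tau_{z_k}u,\delta_{2\lambda_k}\tau_{z_k}v)\to0$, by strong convergence of $\tau_{z_k}$ along a subsequence and Lemma~\ref{lem 2.2}. For $|z_k|\to\infty$ the supports are separated by the hyperplane $z_k^{\perp}$. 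The support-only route you put first, however, has a genuine hole, and it is not where you locate it.

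In (a) there is no obstacle. Your two bullets with threshold $2R\,2^{-\lambda_k}$ are exhaustive. In the second one, $T_{y_k,\lambda_k}v$ lives in $B(0,3R\,2^{-\lambda_k})$ while $KT_{y_k,\lambda_k}u$ vanishes on $B(0,2^{\lambda_k}/(3R))$, so the pairing is exactly $0$ once $2^{2\lambda_k}>9R^2$.

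The failure is in (b), when $z_k=2^{\lambda_k}y_k$ stays bounded. Take $y_k=2^{-\lambda_k}z$ with $u(-z)v(-z)\neq0$. The pairing equals $(K\tau_zu,\delta_{2\lambda_k}\tau_zv)$. Here $K\tau_zu\sim u(-z)|x|^{2-N}$ at infinity, and the support of $\delta_{2\lambda_k}\tau_zv$ contains $B(0,\rho\,2^{-2\lambda_k})$. So the supports overlap for all large $k$, and in fact the pairing is asymptotic to $(N-2)|\mathbb{S}^{N-1}|\,u(-z)v(-z)\,2^{(N-2)\lambda_k}\neq0$. It tends to $0$ only because $\delta_{2\lambda_k}\rightharpoonup0$, never by disjointness.

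Your proposed repairs for (b) do not apply either. We have $KT_{y,\lambda}K=(K\tau_yK)\,\delta_{-\lambda}$, and $K\tau_yK$ moves the point at infinity, so it is not in $G$ for $y\neq0$. Your analysis in (a) used the explicit support $y_k+2^{-\lambda_k}\operatorname{supp}$, valid only for elements of $G$. So (b) really needs the weak-convergence argument of your second route, which is exactly the paper's second case ($|\lambda_k|\to\infty$ with bounded translation parameter, via $K\delta_\lambda=\delta_{-\lambda}K$).

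Finally, in the case $|z_k|\to\infty$ of that route, do not argue via ``concentration near $-z-z/|z|^2$''. That ignores the factor $\delta_{-2\lambda_k}$, and $\lambda_k$ is unconstrained there. The support of $T^*KTu$ actually sits near $-z-2^{2\lambda}z/|z|^2$, with radius of order $2^{2\lambda}R/|z|^2$, which need not be small.

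Use instead your own first bullet of (a). Its hypothesis $|y_k|\ge2R\,2^{-\lambda_k}$ is just $|z_k|\ge2R$, and its conclusion holds for every $\lambda$:
\begin{itemize}
\item for $|z|>R$, the supports of $\tau_zu$ and $\tau_zv$ lie in $\{x\cdot z>0\}$;
\item $\delta_\lambda$ preserves this half-space;
\item $x\mapsto-x/|x|^2$ maps it onto $\{x\cdot z<0\}$.
\end{itemize}
This is the paper's first case.
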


\begin{proof}
Let $(S_{\lambda_k,y_k}) \subset G$ such that $S_{\lambda_k,y_k} \rightharpoonup 0$ weakly. By Lemma \ref{lem 2.2} and $S_{\lambda_k,y_k} = T_{2^{-\lambda_k}y_k, \lambda_k}$, it follows that $|2^{-\lambda_k}y_k| + |\lambda_k| \to \infty$, which yields $|y_k| + |\lambda_k| \to \infty$.
For all $u, v \in \mathcal{D}^{1,2}(\R^N)$, we have 
\begin{align*}
(S_{\lambda_k,y_k}^* K S_{\lambda_k,y_k} u, v)_{\mathcal{D}^{1,2}} &= (K S_{\lambda_k,y_k} u, S_{\lambda_k,y_k}  v)_{\mathcal{D}^{1,2}}.
\end{align*}
Since $C_0^\infty(\mathbb{R}^N \setminus \{0\})$ is dense in $\mathcal{D}^{1,2}(\mathbb{R}^N)$, 
it suffices to show that $(K S_{\lambda_k,y_k} u, S_{\lambda_k,y_k} v)_{\mathcal{D}^{1,2}}\to 0$ for all $u, v \in C_0^\infty(\mathbb{R}^N \setminus \{0\})$.
Let $u, v \in C_0^\infty(\mathbb{R}^N \setminus \{0\})$.
Passing to a subsequence, we may assume that either $|y_k| \to \infty$, or $|\lambda_k|\to \infty$ and $(y_k)$ is bounded.

First, suppose $|y_k| \to \infty$. Then, for all sufficiently large $k\in\mathbb{N}$, we have 
\begin{equation*}
{\rm supp}(\tau_{y_k}u), \ {\rm supp}(\tau_{y_k}v) \subset \{x \in \mathbb{R}^N \mid x \cdot y_k > 0\}.
\end{equation*}
Since $S_{\lambda_k,y_k} = \delta_{\lambda_k} \circ \tau_{y_k}$ and  $\delta_{\lambda_k}$ is a dilation with respect to the origin in $\R^N$, it follows that
\begin{equation}\label{2.14}
\operatorname{supp}(S_{\lambda_k,y_k}u), \operatorname{supp}(S_{\lambda_k,y_k}v) \subset \{x \in \mathbb{R}^N \mid x \cdot y_k > 0\}.
\end{equation}
Moreover, we obtain
\begin{equation}
\label{2.15}
\operatorname{supp}(KS_{\lambda_k,y_k}u)\subset \{x \in \mathbb{R}^N \mid x \cdot y_k < 0\}.
\end{equation}
Indeed, let $x \in \operatorname{supp}(KS_{\lambda_k,y_k}u)$. By the definition of the Kelvin transform, we have $-\frac{x}{|x|^2} \in \operatorname{supp}(S_{\lambda_k,y_k}u)$. 
Hence,  \eqref{2.14} implies $x \cdot y_k<0$. 
Thus, \eqref{2.15} holds. 
Therefore, we obtain $\operatorname{supp}(KS_{\lambda_k,y_k}u) \cap \operatorname{supp}(S_{\lambda_k,y_k}v) = \emptyset$ for all sufficiently large $k \in \mathbb{N}$, which implies $(KS_{\lambda_k,y_k}u, S_{\lambda_k,y_k} v)_{\mathcal{D}^{1,2}}\to 0$.

Next, suppose $|\lambda_k| \to \infty$ and $(y_k)$ is bounded. Then, it suffices to show that
\begin{equation}
\label{2.16}
(K\delta_{\lambda_k}u, \delta_{\lambda_k} v)_{\mathcal{D}^{1,2}}\to 0.
\end{equation}
In fact, suppose for contradiction that $(KS_{\lambda_k,y_k} u, S_{\lambda_k,y_k} v)_{\mathcal{D}^{1,2}} \not \to 0$. 
Then, passing to a subsequence, we may assume $y_k \to y$ for some $y \in \R^N$. Moreover, up to a further subsequence, there exists $C > 0$ such that
\begin{equation*}
\lim_{k \to \infty} |(KS_{\lambda_k,y_k}u, S_{\lambda_k,y_k} v)_{\mathcal{D}^{1,2}}| \ge C.
\end{equation*}
Since $S_{\lambda_k,y_k} = \delta_{\lambda_k} \circ \tau_{y_k}$ and $\tau_{y_k} \to \tau_y$ strongly, we have
\begin{equation*}
\lim_{k \to \infty} |(K\delta_{\lambda_k}(\tau_y u), \delta_{\lambda_k}(\tau_y v))_{\mathcal{D}^{1,2}}| \ge C,
\end{equation*}
which contradicts \eqref{2.16}. Now, a direct calculation shows that $K \delta_{\lambda_k} = \delta_{-\lambda_k} K$. 
Hence, we have
\begin{equation*}
(K\delta_{\lambda_k}u, \delta_{\lambda_k} v)_{\mathcal{D}^{1,2}} = (\delta_{-\lambda_k} Ku, \delta_{\lambda_k} v)_{\mathcal{D}^{1,2}} = (Ku, \delta_{2\lambda_k} v)_{\mathcal{D}^{1,2}}.
\end{equation*}
Lemma \ref{lem 2.2} yields $(Ku, \delta_{2\lambda_k} v)_{\mathcal{D}^{1,2}} \to 0$
as $|\lambda_k| \to \infty$.
Thus, \eqref{2.16} holds. 

Consequently, in either cases, we proved $(KS_{\lambda_k,y_k}u, S_{\lambda_k,y_k} v)_{\mathcal{D}^{1,2}}\to 0$.
Since the above argument applies to every subsequence, the original sequence also converges to $0$.
Therefore, the proof is complete.
\end{proof}

By Lemmas \ref{lem 2.2} and \ref{lem 2.3}, the assumptions of Theorem \ref{thm 1.8} are satisfied with $H=\mathcal{D}^{1,2}(\mathbb{R}^N)$, $D=G$, 
and $K$ equal to the Kelvin transform on $\mathcal{D}^{1,2}(\mathbb{R}^N)$. 
Therefore, every bounded sequence $(u_k) \subset \mathcal{D}_K$ admits a profile decomposition. 
This fact is used in the proof of Theorem \ref{thm 1.2}.

\section{Preliminaries and energy estimate}
\label{section 3}
In this section, we derive the energy estimate for the level $c_K$ required for the proof of Theorem \ref{thm 1.2}. 
A key ingredient in our argument is the invariance of the functional $I$ under the Kelvin transform. 
We therefore begin by recalling several properties of the Kelvin transform.

\begin{lem}
\label{lem 3.1}
The Kelvin transform $K$ on $\mathcal{D}^{1,2}(\mathbb{R}^N)$ satisfies the following properties.
\begin{itemize}
\item[(i)] $K^2 = id$.
\item[(ii)] $\|Ku\|_{\mathcal{D}^{1,2}} = \|u\|_{\mathcal{D}^{1,2}}$.
\item[(iii)] If (a2) holds, we have 
$\displaystyle \int_{\mathbb{R}^N} a(x) |Ku|^{2^*} \, dx = \int_{\mathbb{R}^N} a(x) |u|^{2^*} \, dx.$
\end{itemize}
In particular, $K$ is a self-adjoint unitary operator on $\mathcal{D}^{1,2}(\mathbb{R}^N)$, and an isometry on $L^{2^*}(\mathbb{R}^N)$.
\end{lem}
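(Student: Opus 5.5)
We verify (i)--(iii) first for $u \in C_0^\infty(\mathbb{R}^N \setminus \{0\})$ and then pass to all of $\mathcal{D}^{1,2}(\mathbb{R}^N)$ by density. On this core $Ku$ is again a smooth function compactly supported away from the origin. The substitution $y = -x/|x|^2$ is a smooth involution of $\mathbb{R}^N \setminus \{0\}$ satisfying $|y| = 1/|x|$. Its Jacobian matrix is $-|x|^{-2}(I - 2\,x\otimes x/|x|^2)$, so $|\det| = |x|^{-2N}$.

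\textbf{Step 1: property (i).} We compute directly:
\[
(K^2u)(x) = |x|^{-(N-2)}\,(Ku)\bigl(-x/|x|^2\bigr) = |x|^{-(N-2)}\,|x|^{N-2}\,u\bigl(-(-x/|x|^2)/|x|^{-2}\bigr) = u(x).
\]
Once (ii) is known, $K$ extends by continuity, and the identity $K^2 = id$ passes to the extension.

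\textbf{Step 2: property (iii) and the $L^{2^*}$ isometry.} We have $|Ku(x)|^{2^*} = |x|^{-2N}\,|u(-x/|x|^2)|^{2^*}$, because $(N-2)\cdot 2^* = 2N$. Changing variables with $|\det| = |x|^{-2N}$, and using $a(x) = a(y)$ from (a2), gives
\[
\int a(x)\,|Ku|^{2^*}\,dx = \int a(y)\,|u(y)|^{2^*}\,dy.
\]
Taking $a \equiv 1$ gives the $L^{2^*}$ isometry. Both identities extend to $\mathcal{D}^{1,2}(\mathbb{R}^N)$ because $\mathcal{D}^{1,2}(\mathbb{R}^N) \hookrightarrow L^{2^*}(\mathbb{R}^N)$ continuously and $a$ is bounded by (a1).

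\textbf{Step 3: property (ii), the main computation.} I would avoid differentiating the weight directly and instead use the classical fact that the Kelvin transform intertwines the Laplacian: $\Delta(Ku)(x) = |x|^{-4}\,(K\Delta u)(x)$. This is checked for the standard inversion $x \mapsto x/|x|^2$ and then composed with the orthogonal reflection $x \mapsto -x$, which commutes with $\Delta$. Since $Ku$ has compact support away from the origin, we can integrate by parts and then change variables as in Step 2:
\[
\int |\nabla Ku|^2 = -\int Ku\,\Delta Ku = -\int |x|^{-(N-2)}\,u(y)\,|x|^{-4}\,|x|^{-(N-2)}\,(\Delta u)(y)\,dx .
\]
The weights multiply to $|x|^{-2N}$, which equals the Jacobian factor, so the integral becomes $-\int u\,\Delta u = \|u\|_{\mathcal{D}^{1,2}}^2$. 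Verifying the intertwining identity, or alternatively expanding $|\nabla(Ku)|^2$ and disposing of the cross term by an integration by parts, is the only genuinely computational step and the main obstacle.

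Density of $C_0^\infty(\mathbb{R}^N \setminus \{0\})$ then extends $K$ to an isometry of $\mathcal{D}^{1,2}(\mathbb{R}^N)$. By (i) this isometry is surjective, hence unitary. Finally, $K^* = K^{-1} = K$, so $K$ is self-adjoint.
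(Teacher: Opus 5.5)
Your proposal is correct and follows essentially the same route as the paper: you verify everything on $C_0^\infty(\mathbb{R}^N\setminus\{0\})$, obtain (ii) from the intertwining identity $\Delta(Ku)=|x|^{-4}K(\Delta u)$ plus one integration by parts and the inversion change of variables with Jacobian $|x|^{-2N}$, get (iii) by a direct change of variables, and use $K^2=id$ with the isometry to conclude $K^*=K^{-1}=K$. The only difference is cosmetic: the paper derives the intertwining identity from the polynomial case (a cited result) and local $C^2$ density of polynomials, whereas you reduce it to the classical identity for the standard inversion composed with the reflection $x\mapsto -x$.
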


\begin{proof}
Since $C_0^\infty(\mathbb{R}^N \setminus \{0\})$ is dense in $\mathcal{D}^{1,2}(\mathbb{R}^N)$,
it suffices to show the results for functions in $C_0^\infty(\mathbb{R}^N \setminus \{0\})$. 
For $u \in C_0^\infty(\mathbb{R}^N \setminus \{0\})$,
\begin{align*}
(K^2u)(x) & =\frac{1}{|x|^{N-2}}(Ku)\left(-\frac{x}{|x|^2}\right)
= \frac{1}{|x|^{N-2}} \frac{1}{\left|-\frac{x}{|x|^2}\right|^{N-2}} u\left(-\frac{-\frac{x}{|x|^2}}{\left|-\frac{x}{|x|^2}\right|^2}\right)
= u(x).
\end{align*}
Thus, (i) holds. 
A direct calculation shows that $\Delta(Kp)=K(|x|^4\Delta p)$ for every polynomial $p$ (see \cite[Proposition 4.6]{ABR}). 
Since polynomials are locally dense in $C^2$, it follows that
\begin{equation*}
\Delta(Ku)=K(|x|^4\Delta u) = {|x|^{-N-2}}(\Delta u)\left(-\frac{x}{|x|^2}\right)
\end{equation*}
for every $u\in C_0^\infty(\mathbb{R}^N\setminus\{0\})$.
Hence, for $u,v\in C_0^\infty(\mathbb{R}^N\setminus\{0\})$, we have
\begin{align*}
(Ku, Kv)_{\mathcal{D}^{1,2}} 
& = \int_{\R^N} \nabla(Ku) \cdot \nabla(Kv)\, dx \\
& = -\int_{\R^N} \Delta(Ku) Kv\, dx \\
& = -\int_{\mathbb{R}^N} |x|^{-2N} (\Delta u)\left(-\frac{x}{|x|^2}\right) v\left(-\frac{x}{|x|^2}\right) \, dx.
\end{align*}
By the change of variables $y = - \frac{x}{|x|^2}$, whose Jacobian determinant is $|y|^{-2N}$, we obtain
\begin{equation*}
(Ku, Kv)_{\mathcal{D}^{1,2}} 
= -\int_{\mathbb{R}^N} |y|^{2N} (\Delta u)(y) v(y) |y|^{-2N} \, dy
= -\int_{\R^N} (\Delta u) v\, dx \\
= (u, v)_{\mathcal{D}^{1,2}}.
\end{equation*}
Therefore, $K$ is an isometry on $\mathcal{D}^{1,2}(\mathbb{R}^N)$. 
In particular, (ii) holds. 
By $K^2=id$ and the isometry of $K$, we obtain $K^{-1} = K = K^*$. 
Hence, $K$ is a self-adjoint unitary operator on $\mathcal{D}^{1,2}(\mathbb{R}^N)$. 
Moreover,  the change of variables $x=-\frac{y}{|y|^2}$ yields
\begin{equation*}
\int_{\mathbb{R}^N} a(x) |Ku|^{2^*} \, dx 
= \int_{\mathbb{R}^N} a(x) \frac{1}{|x|^{2N}}\left|u\left(-\frac{x}{|x|^2}\right)\right|^{2^*} \, dx 
= \int_{\mathbb{R}^N} a\left(-\frac{y}{|y|^2}\right) |u|^{2^*} \, dy.
\end{equation*}
By (a2), (iii) holds. 
Taking $a\equiv1$ in (iii), we conclude that $K$ is an isometry on $L^{2^*}(\mathbb{R}^N)$.
\end{proof}

By Lemma \ref{lem 3.1}, if (a2) holds, then we have 
\begin{equation*}
I(Ku) = I(u) \quad \text{ for all } u \in \mathcal{D}^{1,2}(\mathbb{R}^N),
\end{equation*} 
that is, $I$ is invariant under the Kelvin transform.

Next, we establish an energy estimate for $c_K$.
Energy estimates are often essential for obtaining compactness through concentration--compactness arguments as seen, for example, in \cite{Brezis--Nirenberg, P.L.Lions1, P.L.Lions2}. 
For frameworks imposing symmetry assumptions, we refer to \cite{Adachi, Bahri--Li, Clapp, Hirata08}.
The following proposition provides the desired estimate for $c_K$.

\begin{prop}
\label{prop 3.2}
Suppose that (a1)--(a2) hold and that either (a3) or (a4) holds.
Then we have $c_K < \frac{2}{N}S^{\frac{N}{2}}$.
\end{prop}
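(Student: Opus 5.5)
The plan is to normalize $a(0)=1$ and bound $c_K$ from above by evaluating the Rayleigh quotient
\[
Q(u)=\frac{\|u\|_{\mathcal D^{1,2}}^2}{\left(\int a|u|^{2^*}\right)^{2/2^*}}
\]
on explicit Kelvin-invariant test functions built from the Talenti function $U$. Since $c_K=\frac1N(\inf_{\mathcal D_K\setminus\{0\}}Q)^{N/2}$ and $\|U\|_{\mathcal D^{1,2}}^2=\|U\|_{2^*}^{2^*}=S^{N/2}$, the goal $c_K<\frac2N S^{N/2}$ is equivalent to finding $u\in\mathcal D_K\setminus\{0\}$ with $Q(u)<2^{2/N}S$. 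Set $U_\varepsilon(x)=\varepsilon^{-\frac{N-2}{2}}U(x/\varepsilon)$. A direct computation gives $KU_\varepsilon=U_{1/\varepsilon}$; in particular $KU=U$.

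\textbf{Case (a4).} Take $u=U$, which lies in $\mathcal D_K$. The inequality $Q(U)<2^{2/N}S$ reads $\bigl(\int U^{2^*}/\int aU^{2^*}\bigr)^{(N-2)/N}<2^{2/N}$, that is,
\[
\int aU^{2^*}>2^{-\frac2{N-2}}\int U^{2^*}.
\]
We have $U^{2^*}\propto(1+|x|^2)^{-N}$, and the measure $(1+|x|^2)^{-N}dx$ is invariant under $x\mapsto -x/|x|^2$; this is the computation in Lemma \ref{lem 3.1}(iii) applied to $U=KU$. Combined with (a2), both integrals over $\mathbb R^N$ equal twice the corresponding integrals over $\{|x|\le1\}$. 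The required inequality is therefore exactly (a4).

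\textbf{Case (a3).} Take $u_\varepsilon=U_\varepsilon+U_{1/\varepsilon}=U_\varepsilon+KU_\varepsilon\in\mathcal D_K$ and let $\varepsilon\to0$. Write $B_\varepsilon=\int U_\varepsilon^{2^*-1}U_{1/\varepsilon}=(\nabla U_\varepsilon,\nabla U_{1/\varepsilon})_{\mathcal D^{1,2}}$. The standard two-bubble interaction estimate gives $B_\varepsilon\sim C\varepsilon^{N-2}$ with $C>0$, since the scale ratio is $\varepsilon^2$. Then:
\begin{itemize}
\item[$\bullet$] $\|u_\varepsilon\|^2=2S^{N/2}+2B_\varepsilon$.
\item[$\bullet$] By the elementary inequality $(s+t)^{2^*}\ge s^{2^*}+t^{2^*}+2^*(s^{2^*-1}t+st^{2^*-1})$ for $s,t\ge0$,
\[
\int|u_\varepsilon|^{2^*}\ge 2S^{N/2}+2\cdot2^*B_\varepsilon .
\]
\item[$\bullet$] For the coefficient, Kelvin invariance (Lemma \ref{lem 3.1}(iii)) and the splitting of the nonlinear integrand give
\[
\int(1-a)|u_\varepsilon|^{2^*}\le C\int(1-a)\bigl(U_\varepsilon^{2^*}+U_{1/\varepsilon}^{2^*}\bigr)=2C\int(1-a(\varepsilon y))U(y)^{2^*}dy .
\]
By (a3), $1-a(z)=o(|z|^{N-2})$ near $0$, while $1-a$ is bounded and $U^{2^*}$ decays like $|y|^{-2N}$. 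Dominated convergence on $\{|y|\le\delta/\varepsilon\}$, together with a tail bound on the complement, shows this is $o(\varepsilon^{N-2})$.
\end{itemize}
Expanding, $Q(u_\varepsilon)=2^{2/N}S\bigl(1+\tfrac{B_\varepsilon}{S^{N/2}}-\tfrac{2}{2^*}\cdot\tfrac{2^*B_\varepsilon}{S^{N/2}}+o(\varepsilon^{N-2})\bigr)=2^{2/N}S\bigl(1-\tfrac{B_\varepsilon}{S^{N/2}}+o(\varepsilon^{N-2})\bigr)$, which is $<2^{2/N}S$ for small $\varepsilon$.

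The main obstacle is the bookkeeping in case (a3): verifying that the interaction gain is exactly of order $\varepsilon^{N-2}$ with a positive constant, and that the loss from $a<1$ is genuinely $o(\varepsilon^{N-2})$. The tail region $|y|\gtrsim1/\varepsilon$ contributes $O(\varepsilon^{N})$, which is harmless. Cross terms in the $a$-integral are controlled by the same splitting or by Hölder, so they should not cause trouble.
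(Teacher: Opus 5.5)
You treat case (a4) exactly as the paper does. In case (a3) you use the same test function $U_\varepsilon+U_{1/\varepsilon}$. Peeling off the weight via $\int a|u_\varepsilon|^{2^*}=\int|u_\varepsilon|^{2^*}-\int(1-a)|u_\varepsilon|^{2^*}$, with the convexity bound $(s+t)^{2^*}\le 2^{2^*-1}(s^{2^*}+t^{2^*})$, is a legitimate and slightly simpler substitute for the weighted interaction asymptotics of Lemma~\ref{lem 3.3}(ii). It only needs $a\le1$, Kelvin invariance, and (a3) for a single bubble.

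\textbf{The gap.} Your pointwise inequality
\[(s+t)^{2^*}\ge s^{2^*}+t^{2^*}+2^*\bigl(s^{2^*-1}t+st^{2^*-1}\bigr)\]
is false whenever $2^*<3$, i.e.\ for every $N\ge7$. At $s=t=1$ it would require $2^p\ge 2+2p$ with $p=2^*$. But $p\mapsto 2^p-2-2p$ is convex, equals $-2$ at $p=2$ and $0$ at $p=3$, so it is negative on $(2,3)$; for $N=10$, $2^{5/2}\approx5.66<7$. In fact, for $2<p<3$ the function $h(x)=(1+x)^p-1-x^p-px-px^{p-1}$ has $h(0)=h'(0)=0$ and $h''<0$ on $(0,\infty)$, so the inequality fails for \emph{every} pair $s,t>0$. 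It does hold for $p\ge3$ (as an identity when $N=6$). So your argument is complete only for $3\le N\le6$; for $N\ge7$ the bound $\int|u_\varepsilon|^{2^*}\ge2S^{N/2}+2\cdot2^*B_\varepsilon$ is unjustified.

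\textbf{The repair.} It is cheap, because you have a factor-two margin. With a constant $C$ in place of $2^*$, your expansion becomes $Q(u_\varepsilon)\le 2^{2/N}S\bigl(1-(\tfrac{2C}{2^*}-1)\tfrac{B_\varepsilon}{S^{N/2}}+o(\varepsilon^{N-2})\bigr)$, so any $C>\tfrac{2^*}{2}=\tfrac{N}{N-2}$ suffices. This is exactly what the paper's Lemma~\ref{lem 3.4} provides for all $p>2$. It combines $(s+t)^p-s^p-t^p-ps^{p-1}t>0$ with its symmetric counterpart to get coefficient $p/2$ with strict inequality. A compactness argument in the ratio $\min(s,t)/\max(s,t)\in(0,1]$, where the relevant quotient tends to $p$ at $0^+$, then upgrades this to some $C_p>p/2$.

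Alternatively, you can keep your constant $2^*$. Use the valid inequality $(s+t)^p\ge s^p+t^p+p\max(s^{p-1}t,\,st^{p-1})$ for $p\ge2$, together with the fact that $U_\varepsilon>U_{1/\varepsilon}$ exactly on $\{|x|<1\}$. The Kelvin change of variables then gives $\int|u_\varepsilon|^{2^*}\ge2S^{N/2}+2\cdot2^*\int_{|x|<1}U_\varepsilon^{2^*-1}U_{1/\varepsilon}=2S^{N/2}+2\cdot2^*B_\varepsilon-O(\varepsilon^N)$. After that, the rest of your computation goes through unchanged.
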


To prove Proposition \ref{prop 3.2}, we employ a family of Talenti functions $(U_{\varepsilon})_{\varepsilon>0}$ defined by 
\begin{equation*}
	U_\varepsilon(x) = \varepsilon^{-\frac{N-2}{2}}U(\varepsilon^{-1}x)
	= [N (N - 2)]^{\frac{N-2}{4}} \left(\frac{1}{\varepsilon^2 + |x|^2}\right)^{\frac{N-2}{2}}. 
\end{equation*}  
Then, $U_\varepsilon$ attains $S$ and is a positive solution of $-\Delta u = u^{2^*-1}$ in $\R^N$.  
Therefore, $U_{\varepsilon}$ satisfies
\begin{align}
\label{3.1}
& \int_{\mathbb{R}^N} \nabla U_{\varepsilon} \cdot \nabla \varphi \, dx = \int_{\mathbb{R}^N} U_{\varepsilon}^{2^*-1}\varphi \, dx \quad \text{for all } \varphi \in \mathcal{D}^{1,2}(\mathbb{R}^N), \\
\label{3.2}
& \int_{\mathbb{R}^N} U_{\varepsilon}^{2^*} \, dx = \int_{\mathbb{R}^N} |\nabla U_{\varepsilon}|^2 \, dx = S^{\frac{N}{2}}.
\end{align}
Moreover, a direct calculation shows that 
\begin{equation}\label{3.3}
	KU_\varepsilon = U_{\frac{1}{\varepsilon}} \quad \text{ for } \varepsilon>0.
\end{equation}

\begin{lem}
\label{lem 3.3}
Suppose (a1)--(a3). Then, as $\varepsilon \to 0^+$, the following hold:
\begin{itemize}
    \item[(i)] $\displaystyle \int_{\mathbb{R}^N} a(x) U_\varepsilon^{2^*} \, dx = S^{\frac{N}{2}} + o(\varepsilon^{N-2}),$
    \item[(ii)] $\displaystyle \int_{\mathbb{R}^N} a(x) U_\varepsilon^{2^*-1} U_{\frac{1}{\varepsilon}} \, dx = A \varepsilon^{N-2} + o(\varepsilon^{N-2})$, where $A = U(0)\|U\|_{2^*-1}^{2^* -1}$.
\end{itemize}
\end{lem}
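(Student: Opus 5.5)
We assume $a(0)=1$ as in the paper. Both parts reduce to comparing with the case $a\equiv 1$ and controlling the error $a(x)-1$ by the flatness condition (a3). Note that (a2)--(a3) also give $|x|^{N-2}(a(x)-1)\to 0$ as $|x|\to\infty$, although near infinity we will only need boundedness of $a$.

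For (i), by \eqref{3.2} it suffices to show $\int (1-a)U_\varepsilon^{2^*}\,dx=o(\varepsilon^{N-2})$. Substituting $x=\varepsilon y$ gives
\[
\int (1-a(\varepsilon y))\,U(y)^{2^*}\,dy .
\]
Given $\eta>0$, (a3) provides $\rho>0$ with $|1-a(x)|\le \eta |x|^{N-2}$ for $|x|\le\rho$; by (a1), $0\le 1-a\le 1$ everywhere. We split the integral at $|y|=\rho/\varepsilon$.
\begin{itemize}
\item On $|y|\le\rho/\varepsilon$ the integrand is at most $\eta\,\varepsilon^{N-2}|y|^{N-2}U(y)^{2^*}$. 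Since $U^{2^*}\sim |y|^{-2N}$, the function $|y|^{N-2}U^{2^*}\sim|y|^{-N-2}$ is integrable. This part is therefore $\le C\eta\,\varepsilon^{N-2}$.
\item On $|y|\ge\rho/\varepsilon$ the integral is at most $\int_{|y|\ge\rho/\varepsilon}U^{2^*}\le C(\varepsilon/\rho)^{N}=o(\varepsilon^{N-2})$.
\end{itemize}
Letting $\eta\to0$ gives (i).

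For (ii), we write $U_{1/\varepsilon}(x)=\varepsilon^{N-2}[N(N-2)]^{\frac{N-2}{4}}(1+\varepsilon^2|x|^2)^{-\frac{N-2}{2}}$, so $U_{1/\varepsilon}(x)=\varepsilon^{N-2}U(\varepsilon x)$. Substituting $x=\varepsilon y$, and using $U_\varepsilon^{2^*-1}(\varepsilon y)\,\varepsilon^N=\varepsilon^{\frac{N-2}{2}}U(y)^{2^*-1}$ together with the extra factor from $U_{1/\varepsilon}$, we get
\[
\int a(x)U_\varepsilon^{2^*-1}U_{1/\varepsilon}\,dx=\varepsilon^{N-2}\int a(\varepsilon y)\,U(y)^{2^*-1}\,U(\varepsilon^2 y)\,dy .
\]
We will double-check the exponents; the homogeneity $(2^*-1)\frac{N-2}{2}=\frac{N+2}{2}$ makes them close exactly. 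The integrand converges pointwise to $a(0)U(0)U(y)^{2^*-1}=U(0)U^{2^*-1}$. It is dominated by $U(0)U^{2^*-1}$, because $a\le1$ and $U\le U(0)$. Moreover $U^{2^*-1}\sim|y|^{-(N+2)}$ is integrable. Dominated convergence then yields the limit $U(0)\|U\|_{2^*-1}^{2^*-1}=A$, which is (ii).

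The main obstacle is step (i): one needs the sharp $o(\varepsilon^{N-2})$ rather than just $o(1)$. This is exactly where (a3) enters, together with the integrability of $|y|^{N-2}U^{2^*}$ at infinity, which holds because $2N-(N-2)=N+2>N$. Part (ii) only uses continuity of $a$ at $0$.
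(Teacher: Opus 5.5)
Your argument follows the paper's proof. For (i) you rescale $x=\varepsilon y$ and split at $|x|=\rho$ using (a3). In the far region you bound by $0\le 1-a\le 1$ and a tail $\int_{|y|\ge\rho/\varepsilon}U^{2^*}=O(\varepsilon^N)$, whereas the paper uses $|x|^{-(N-2)}|a-1|\le C$ and the tail of $\int|y|^{N-2}U^{2^*}$; this difference is inessential. For (ii) you reduce to $\varepsilon^{N-2}\int a(\varepsilon y)U^{2^*-1}U(\varepsilon^2 y)\,dy$ and pass to the limit. Your explicit dominating function $U(0)U^{2^*-1}$ makes that limit step slightly more careful than the paper's.

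There is one exponent slip in (ii). The normalization needed for \eqref{3.1}--\eqref{3.3} is $U_\delta=\delta^{-\frac{N-2}{2}}U(\cdot/\delta)$. You use it yourself in (i) and in $U_\varepsilon(\varepsilon y)^{2^*-1}\varepsilon^N=\varepsilon^{\frac{N-2}{2}}U(y)^{2^*-1}$. With it, $U_{1/\varepsilon}(x)=\varepsilon^{\frac{N-2}{2}}U(\varepsilon x)$, not $\varepsilon^{N-2}U(\varepsilon x)$.

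Your formula comes from the paper's displayed closed form for $U_\varepsilon$, which drops a factor $\varepsilon^{\frac{N-2}{2}}$. The correct closed form is $[N(N-2)]^{\frac{N-2}{4}}\bigl(\varepsilon/(\varepsilon^2+|x|^2)\bigr)^{\frac{N-2}{2}}$. As written, your two factors multiply to $\varepsilon^{\frac{3(N-2)}{2}}$ rather than $\varepsilon^{N-2}$, so your displayed identity does not follow from what precedes it. With the corrected $U_{1/\varepsilon}$ you get $\varepsilon^{\frac{N-2}{2}}\cdot\varepsilon^{\frac{N-2}{2}}=\varepsilon^{N-2}$, which is exactly the paper's computation, and the rest of your argument goes through unchanged.
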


\begin{proof}
We first prove (i).
By \eqref{3.2}, we have
\begin{align}
\notag
\int_{\mathbb{R}^N} a(x) U_\varepsilon^{2^*} \, dx &= \int_{\mathbb{R}^N} U_\varepsilon^{2^*} \, dx +  \int_{\mathbb{R}^N} (a(x)-1) U_\varepsilon^{2^*} \, dx \\
\label{3.4}
&= S^{\frac{N}{2}} + \int_{\mathbb{R}^N} (a(x)-1) U_\varepsilon^{2^*} \, dx.
\end{align}
Let $\eta > 0$. From (a3), there exists $r > 0$ such that 
\begin{equation}
\label{3.5}
|x|^{-(N-2)} |a(x)-1| < \eta \quad (|x| \le r).
\end{equation}
Moreover, there exists $C > 0$ such that
\begin{equation}
\label{3.6}
|x|^{-(N-2)} |a(x)-1| \le C \quad (|x| \ge r).
\end{equation}
Using \eqref{3.5} and \eqref{3.6}, we have 
\begin{align*}
\left| \int_{\mathbb{R}^N} (a(x)-1) U_\varepsilon^{2^*} \, dx \right|
& \le \left(\int_{|x| \le r} + \int_{|x| \ge r}\right) |x|^{-(N-2)} |a(x)-1| |x|^{N-2} U_\varepsilon^{2^*} \, dx \\
&\le \eta  \int_{|x| \le r} |x|^{N-2} U_\varepsilon^{2^*} \, dx + C \int_{|x| \ge r} |x|^{N-2} U_\varepsilon^{2^*} \, dx.
\end{align*}
By the change of variables $x=\varepsilon y$, it follows that
\begin{align*}
\left| \int_{\mathbb{R}^N} (a(x)-1) U_\varepsilon^{2^*} \, dx \right|
&\le \eta \varepsilon^{N-2} \int_{|y| \le \frac{r}{\varepsilon}} |y|^{N-2} U^{2^*} \, dy 
+ C \varepsilon^{N-2}  \int_{|y| \ge \frac{r}{\varepsilon}} |y|^{N-2} U^{2^*} \, dy\\
&\le \eta  \varepsilon^{N-2} \int_{|y| \le \frac{r}{\varepsilon}} |y|^{N-2} U^{2^*} \, dy + o(\varepsilon^{N-2}).
\end{align*}
Since $|y|^{N-2} U^{2^*}\in L^1(\R^N)$ and $\eta > 0$ is arbitrary, we have
\begin{align}
\label{3.7}
\int_{\mathbb{R}^N} (a(x)-1) U_\varepsilon^{2^*} \, dx & = o(\varepsilon^{N-2}).
\end{align}
By \eqref{3.4} and \eqref{3.7}, (i) holds. 
We next prove (ii).
By the change of variables $x=\varepsilon y$, we have
\begin{align*}
\int_{\mathbb{R}^N} a(x) U_\varepsilon^{2^*-1} U_{\frac{1}{\varepsilon}} \, dx 
& = \int_{\mathbb{R}^N} a(x)\varepsilon^{-\frac{N+2}{2}}U(\varepsilon^{-1}x)^{2^*-1}\varepsilon^{\frac{N-2}{2}}U(\varepsilon x) \, dx\\
&= \varepsilon^{N-2} \int_{\mathbb{R}^N} a(\varepsilon y)U(y)^{2^*-1}U(\varepsilon^2 y) \, dy.
\end{align*}
Since $a(\varepsilon y)\to a(0)=1$ and $U(\varepsilon^2 y)\to U(0)$ pointwise as $\varepsilon \to 0$, we deduce
\[ \int_{\mathbb{R}^N} a(x) U_\varepsilon^{2^*-1} U_{\frac{1}{\varepsilon}} \, dx = U(0)\left(\int_{\mathbb{R}^N} U^{2^*-1} \, dx\right)\varepsilon^{N-2} + o(\varepsilon^{N-2}). \]
Therefore, (ii) holds.
\end{proof}

The following inequality will also be used.

\begin{lem}
\label{lem 3.4}
Let $p > 2$. Then, there exists a constant $C_p > \frac{p}{2}$ such that, for all $a, b \ge 0$, the following inequality holds:
\begin{align}
\label{3.8}
(a+b)^{p} &\ge a^{p} + b^{p} + C_p(a^{p-1}b + a b^{p-1}).
\end{align}
\end{lem}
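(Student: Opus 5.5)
By homogeneity it suffices to work with $t=b/a$. If $a=0$ or $b=0$, \eqref{3.8} reduces to $(a+b)^p\ge a^p+b^p$, which holds with equality, so any constant works there. For $a,b>0$, dividing by $a^p$ turns \eqref{3.8} into
\[
f(t) := \frac{(1+t)^p - 1 - t^p}{t + t^{p-1}} \ge C_p \qquad (t>0).
\]
So we need to show that $m_p:=\inf_{t>0} f(t)$ is strictly larger than $\frac p2$, and then we take $C_p:=m_p$ or any number in $(\frac p2,m_p]$.

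The plan has three steps.

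\emph{Symmetry.} The substitution $t\mapsto 1/t$ leaves $f$ invariant: multiplying numerator and denominator by $t^p$ swaps the roles of $a$ and $b$. Hence it suffices to consider $t\in(0,1]$.

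\emph{Continuity and the endpoint.} $f$ is continuous and positive on $(0,1]$. As $t\to0^+$ we have $(1+t)^p-1-t^p = pt + \frac{p(p-1)}{2}t^2+o(t^2) - t^p$ and $t+t^{p-1}=t(1+t^{p-2})$, so $f(t)\to p$ because $p>2$. Thus $f$ extends continuously to $[0,1]$ with $f(0)=p$. By compactness the infimum is attained at some $t_0\in[0,1]$. It then remains to show $f(t)>\frac p2$ for every $t\in[0,1]$. For $t=0$ this is clear, since $p>\frac p2$.

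\emph{Pointwise strict inequality.} This is the main point. We must prove
\[
(1+t)^p - 1 - t^p > \tfrac p2\,(t+t^{p-1}) \qquad (0<t\le1).
\]
I would expand via the integral representation
\[
(1+t)^p-1-t^p=\int_0^t p\big[(1+s)^{p-1}-s^{p-1}\big]\,ds .
\]
For $p\ge 2$ the map $s\mapsto (1+s)^{p-1}-s^{p-1}$ is nondecreasing (its derivative is $(p-1)[(1+s)^{p-2}-s^{p-2}]\ge0$), and it equals $1$ at $s=0$. Hence $(1+s)^{p-1}-s^{p-1}\ge1$, which gives $(1+t)^p-1-t^p\ge pt$. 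Since $t^{p-1}\le t$ for $t\in(0,1]$, we get
\[
\tfrac p2(t+t^{p-1})\le pt \le (1+t)^p-1-t^p .
\]
Strictness follows from $p>2$. Indeed, for $s>0$ the derivative $(p-1)[(1+s)^{p-2}-s^{p-2}]$ is strictly positive, so $(1+s)^{p-1}-s^{p-1}>1$ for $s>0$ and the first inequality is strict for $t>0$. (Moreover $t^{p-1}<t$ for $t<1$.) Therefore $f>\frac p2$ on $(0,1]$ and $f(0)=p>\frac p2$. The minimum $m_p$ over the compact interval $[0,1]$ is thus strictly greater than $\frac p2$, and by the symmetry step it is the infimum over all $t>0$. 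Setting $C_p=m_p$ proves \eqref{3.8}.

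The only delicate step is getting strict inequality uniformly, and compactness handles it, after we check the limit behaviour at $t=0$ (where $p>2$ is used so that $t^{p-1}=o(t)$) and use the $t\mapsto1/t$ symmetry to avoid the other end.
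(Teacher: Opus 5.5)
Your proof is correct and is essentially the paper's argument. The paper also obtains the strict pointwise bound from an integral representation: it proves $(a+b)^p-a^p-b^p>pa^{p-1}b$ and, symmetrically, $>pab^{p-1}$, then averages them, reduces to $t\in(0,1]$, and uses $f(t)\to p$ as $t\to0^+$ to get a uniform $C_p>\frac p2$. You differ only in minor ways: you use the single bound $>pt$ together with $t^{p-1}\le t$ on $(0,1]$ instead of averaging, and you spell out the continuous extension to $[0,1]$ and the compactness step that the paper leaves implicit.
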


\begin{proof}
When $a=0$ or $b = 0$, the inequality is trivial. 
Thus, it suffices to prove \eqref{3.8} for all $a, b > 0$. We first show the following strict inequality:
\begin{equation}
\label{3.9}
(a+b)^{p} > a^{p} + b^{p} + \frac{p}{2}(a^{p-1}b + a b^{p-1}) \quad \text{ for all } a,b>0.
\end{equation}
From $p > 2$, for all $a, b > 0$, we have
\begin{align*}
(a+b)^p - a^p - b^p - p a^{p-1}b &= p \int_{0}^{b} \left[ (a+t)^{p-1} - t^{p-1} - a^{p-1} \right] \, dt\\
&= p(p-1) \int_{0}^{b} \left( \int_{0}^{t} \left[ (a+s)^{p-2} - s^{p-2} \right] \, ds \right) \, dt > 0.
\end{align*}
By interchanging the roles of $a$ and $b$, we also obtain
\[ (a+b)^p - a^p - b^p - p b^{p-1} a > 0 \quad \text{ for all } a,b>0. \]
Combining these inequalities yields \eqref{3.9}.
From \eqref{3.9}, we obtain 
\begin{align*}
\frac{(a+b)^p - a^p - b^p}{a^{p-1}b + ab^{p-1}} > \frac{p}{2} \quad \text{ for all } a,b>0.
\end{align*}
By symmetry, we may assume $a \le b$. 
Set $t = \frac{a}{b} \in (0, 1]$. 
Then the above inequality reduces to
\begin{equation*}
	\frac{(t+1)^p - t^p - 1}{t^{p-1}+t} > \frac{p}{2}. 
\end{equation*}
Since $\lim_{t\to 0^+} \frac{(t+1)^p - t^p - 1}{t^{p-1}+t} = p > \frac{p}{2}$, there exists a constant $C_p > \frac{p}{2}$ such that
\[ \inf_{t\in(0,1]} \frac{(t+1)^p - t^p - 1}{t^{p-1}+t} = C_p > \frac{p}{2}, \]
which implies \eqref{3.8}.
\end{proof}

We now prove Proposition \ref{prop 3.2}.

\begin{proof}[Proof of Proposition \ref{prop 3.2}]
Suppose that (a1)--(a3) hold. 
Set $W_\varepsilon \coloneqq U_\varepsilon + U_{\frac{1}{\varepsilon}}$.
By \eqref{3.3}, we have $W_\varepsilon \in \mathcal{D}_K$.
For the estimate of $c_K$, we use $W_\varepsilon$  as a test function. 
By \eqref{3.1} and \eqref{3.2}, we have
\begin{align*}
\int_{\mathbb{R}^N} |\nabla W_\varepsilon|^2 \, dx 
& = \int_{\mathbb{R}^N} |\nabla U_\varepsilon|^2 \, dx + \int_{\mathbb{R}^N} |\nabla U_\frac{1}{\varepsilon}|^2 \, dx + 2 \int_{\mathbb{R}^N} \nabla U_\varepsilon \cdot \nabla U_{\frac{1}{\varepsilon}} \, dx\\
&= 2S^{\frac{N}{2}} + 2 \int_{\mathbb{R}^N} U_\varepsilon^{2^*-1}U_{\frac{1}{\varepsilon}} \, dx.
\end{align*}
As $\varepsilon \to 0^+$, by Lemma \ref{lem 3.3} (ii) with $a(x) \equiv 1$, we obtain
\begin{align*}
\int_{\mathbb{R}^N} |\nabla W_\varepsilon|^2 \, dx &= 2S^{\frac{N}{2}} + 2A \varepsilon^{N-2} + o(\varepsilon^{N-2}).
\end{align*}
By using the expansion $(1+x)^{\frac{N}{2}} = 1 + \frac{N}{2}x + o(|x|)$ as $|x| \to 0$, 
it follows that
\begin{align}
\label{3.10}
\left(\frac{\|\nabla W_\varepsilon\|^2_2}{2S^{\frac{N}{2}}}\right)^{\frac{N}{2}} = \left(1 + \frac{A}{S^{\frac{N}{2}}}\varepsilon^{N-2} + o(\varepsilon^{N-2})\right)^{\frac{N}{2}} = 1 + \frac{NA}{2S^{\frac{N}{2}}}\varepsilon^{N-2} + o(\varepsilon^{N-2}).
\end{align}
Next, by Lemma \ref{lem 3.4}, there exists a constant $C_{2^*} > \frac{2^{*}}{2}$ such that 
\begin{equation}\label{3.11}
\int_{\mathbb{R}^N} a(x) {W_\varepsilon}^{2^*} \, dx \ge \int_{\mathbb{R}^N} a(x)\left(U_\varepsilon^{2^*} + U_{\frac{1}{\varepsilon}}^{2^*} + C_{2^*}U_\varepsilon^{2^*-1}U_{\frac{1}{\varepsilon}} + C_{2^*}U_\varepsilon U_{\frac{1}{\varepsilon}}^{2^*-1}\right) \, dx.
\end{equation}
By Lemma \ref{lem 3.1} (iii) and \eqref{3.3}, we have 
\begin{align*}
\int_{\mathbb{R}^N} a(x) U_\varepsilon^{2^*} \, dx = \int_{\mathbb{R}^N} a(x) (KU_\varepsilon)^{2^*} \, dx = \int_{\mathbb{R}^N} a(x) U_{\frac{1}{\varepsilon}}^{2^*} \, dx.
\end{align*}
By the same change of variable as in the proof of Lemma \ref{lem 3.1} (iii) and \eqref{3.3}, we obtain
\[ \int_{\mathbb{R}^N} a(x) U_\varepsilon^{2^*-1}U_{\frac{1}{\varepsilon}} \, dx = \int_{\mathbb{R}^N} a(x)(KU_\varepsilon)^{2^*-1}(KU_{\frac{1}{\varepsilon}}) \, dx = \int_{\mathbb{R}^N} a(x) U^{2^*-1}_{\frac{1}{\varepsilon}}U_\varepsilon \, dx. \]
Hence \eqref{3.11} becomes
\begin{equation*}
\int_{\mathbb{R}^N} a(x) {W_\varepsilon}^{2^*} \, dx \ge 2 \int_{\mathbb{R}^N} a(x) U_\varepsilon^{2^*} \, dx + 2C_{2^*} \int_{\mathbb{R}^N} a(x) U_\varepsilon^{2^*-1}U_{\frac{1}{\varepsilon}} \, dx.
\end{equation*}
As $\varepsilon \to 0^+$, by Lemma \ref{lem 3.3} (i)--(ii), we obtain
\begin{equation*}
\int_{\mathbb{R}^N} a(x) {W_\varepsilon}^{2^*} \, dx 
\ge 2S^{\frac{N}{2}} + 2C_{2^*}A\varepsilon^{N-2} + o(\varepsilon^{N-2}).
\end{equation*}
By using the expansion $(1+x)^{-\frac{N-2}{2}} = 1 - \frac{N-2}{2}x + o(|x|)$ as $|x| \to 0$, it follows that
\begin{equation}
\label{3.12}
\begin{split}
\left(\frac{\int_{\mathbb{R}^N} a(x){W_\varepsilon}^{2^*} \, dx}{2S^{\frac{N}{2}}}\right)^{-\frac{N-2}{2}} &\le \left(1 + \frac{C_{2^*} A}{S^{\frac{N}{2}}}\varepsilon^{N-2} + o(\varepsilon^{N-2})\right)^{-\frac{N-2}{2}}\\
&= 1 - \frac{(N-2)C_{2^*} A}{2S^{\frac{N}{2}}}\varepsilon^{N-2} + o(\varepsilon^{N-2}).
\end{split}
\end{equation}
Combining \eqref{3.10} and \eqref{3.12}, we obtain
\begin{align*}
c_K &\le \frac{1}{N}\frac{\left(\int_{\mathbb{R}^N} |\nabla W_\varepsilon|^2 \, dx\right)^{\frac{N}{2}}}{\left(\int_{\mathbb{R}^N} a(x) |W_\varepsilon|^{2^*} \, dx\right)^{\frac{N-2}{2}}}\\
&\le \frac{2}{N}S^{\frac{N}{2}}\left(1 + \frac{NA}{2S^{\frac{N}{2}}}\varepsilon^{N-2} + o(\varepsilon^{N-2})\right)
\left(1 - \frac{(N-2)C_{2^*} A}{2S^{\frac{N}{2}}}\varepsilon^{N-2} + o(\varepsilon^{N-2})\right)\\
&= \frac{2}{N}S^{\frac{N}{2}} - (N-2)A\left(C_{2^*} - \frac{N}{N-2}\right)\varepsilon^{N-2} + o(\varepsilon^{N-2}).
\end{align*}
Recalling that $C_{2^*} >\frac{2^*}{2} = \frac{N}{N-2}$, we conclude $c_K < \frac{2}{N}S^{\frac{N}{2}}$.

\medskip

Next, suppose that (a1)--(a2) and (a4) hold. 
In this case, we use $U_1$ as a test function. 
From \eqref{3.3} with $\varepsilon = 1$, we see that $U_1 \in \mathcal{D}_K$.
By the change of variables $x = -\frac{y}{|y|^2}$ and (a2), it follows that
\begin{equation*}
\int_{|x| \ge 1} \frac{a(x)-2^{-\frac{2}{N-2}}}{(1+|x|^2)^N} \, dx = \int_{|y| \le 1} \frac{a(y)-2^{-\frac{2}{N-2}}}{(1+|y|^2)^N} \, dy.
\end{equation*}
Combining this and (a4), we obtain
\begin{equation*}
\int_{\mathbb{R}^N} \frac{a(x)-2^{-\frac{2}{N-2}}}{(1+|x|^2)^N} dx >0.
\end{equation*}
Hence, we deduce
\begin{equation*}
\int_{\mathbb{R}^N} a(x) |U_1|^{2^*} \, dx > 2^{-\frac{2}{N-2}} \int_{\mathbb{R}^N} |U_1|^{2^*} \, dx.
\end{equation*}
Therefore, we have
\begin{align*}
c_K &\le \frac{1}{N} \left(\frac{\int_{\mathbb{R}^N} |\nabla U_1|^2 \, dx}{\left(\int_{\mathbb{R}^N} a(x) |U_1|^{2^*} \, dx\right)^{\frac{2}{2^*}}}\right)^{\frac{N}{2}} \\
&< \frac{1}{N}\cdot\frac{1}{(2^{-\frac{2}{N-2}})^{\frac{2}{2^*}\cdot\frac{N}{2}}} \left(\frac{\int_{\mathbb{R}^N} |\nabla U_1|^2 \, dx}{\left(\int_{\mathbb{R}^N} |U_1|^{2^*} \, dx\right)^{\frac{2}{2^*}}}\right)^{\frac{N}{2}} \\
&= \frac{2}{N}S^{\frac{N}{2}},
\end{align*}
which completes the proof.
\end{proof}

\section{Proof of Theorem \ref{thm 1.2}}
\label{section 4}

In this section, we prove Theorem \ref{thm 1.2}. 
A crucial step in the proof is to establish the compactness of $(PS)_{c_K}$ sequences for $I$.
 Recall that, for $(u_k) \subset \mathcal{D}^{1,2}(\mathbb{R}^N)$ and $d \in \mathbb{R}$, 
we say that $(u_k)$ is a (PS)$_d$ sequence for $I$ if $I(u_k) \to d$ and $I^\prime(u_k) \to 0$ as $k \to \infty$. 
To this end, we classify Kelvin-invariant (PS)$_d$ sequences for $I$ via the profile decomposition given in Theorem \ref{thm 1.8} 
with $H=\mathcal{D}^{1,2}(\mathbb{R}^N)$, $D=G$, and $K$ equal to the Kelvin transform on $\mathcal{D}^{1,2}(\mathbb{R}^N)$. 
To formulate this decomposition, we introduce the auxiliary functional $I_a$. 
For $a \in (0,1]$, we define $I_a$ by
\begin{equation*}
	I_a(u) = \frac{1}{2} \int_{\mathbb{R}^N} |\nabla u|^2 \, dx - \frac{a}{2^*} \int_{\mathbb{R}^N} |u|^{2^*} \, dx 
	\quad (u \in \mathcal{D}^{1,2}(\R^N)). 
\end{equation*}
We also denote by $\mathcal{K}_K$ the set of critical points of $I$ on $\mathcal{D}_K$, and by $\mathcal{K}_a$ the set of critical points of $I_a$:
\begin{align*}
\mathcal{K}_K &= \{u \in \mathcal{D}_K \mid I^\prime(u) = 0\},\\
\mathcal{K}_a &= \{u \in \mathcal{D}^{1,2}(\mathbb{R}^N) \mid I^\prime_a(u) = 0\}.
\end{align*}
Note that, for $u \in \mathcal{K}_a \setminus \{0\}$, we have
\begin{align}
\label{4.1}
I_a(u) =\frac{1}{N}\|u\|_{\mathcal{D}^{1,2}}^2 =  \frac{1}{a^{\frac{N-2}{2}}N}\left(\frac{\|u\|_{\mathcal{D}^{1,2}}}{\|u\|_{2^*}}\right)^N
\ge \frac{1}{N}S^{\frac{N}{2}}.
\end{align}
We state the profile decomposition in our setting.

\begin{prop}
\label{prop 4.1}
Suppose (a1)--(a2) and let $({u}_k) \subset \mathcal{D}_K$ be a (PS)$_{d}$ sequence for $I$. Then, up to a subsequence, there exist $n_0 \in \mathbb{N} \cup \{0\}$, ${w}^{(0)} \in \mathcal{K}_K$ and ${g}_k^{(0)} = id$, and for each $n\in \{1,\dots,n_0\}$, there exist $a^{(n)} \in (0,1]$, $w^{(n)} \in \mathcal{K}_{a^{(n)}} \setminus \{0\}$, and $(g_k^{(n)}) \subset G$ such that
\begin{align}
\label{4.2}
& {{g}_k^{(n)}}^* {u}_k \rightharpoonup {w}^{(n)} \text{ weakly in } \mathcal{D}^{1,2}(\mathbb{R}^N) \text{ for all } n\in \{0,1,\dots,n_0\},\\
\label{4.3}
& {{g}_k^{(n)}}^* {g}_k^{(m)} \rightharpoonup 0 \text{ weakly for all } n,m\in \{0,1,\dots,n_0\} \text{ with } n\neq m,\\
\label{4.4}
& {{g}_k^{(n)}}^* K {g}_k^{(m)} \rightharpoonup 0 \text{ weakly for all } n,m\in \{1,\dots,n_0\},\\
\label{4.5}
& \left\| {u}_k - {w}^{(0)} - \sum_{n=1}^{n_0} ({g}_k^{(n)}{w}^{(n)} + {K}{g}_k^{(n)}{w}^{(n)}) \right\|_{\mathcal{D}^{1,2}} \to 0,\\
\label{4.6}
& d = \lim_{k\to\infty} I(u_k) = I(w^{(0)}) + 2\sum_{n=1}^{n_0} I_{a^{(n)}}(w^{(n)}).
\end{align}
\end{prop}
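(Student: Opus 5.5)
First I show that the (PS)$_d$ sequence is bounded: the standard computation $I(u_k)-\frac{1}{2^*}I'(u_k)u_k=\frac1N\|u_k\|_{\mathcal D^{1,2}}^2$ gives $\|u_k\|^2\le C+o(1)\|u_k\|$. Then I apply Theorem~\ref{thm 1.8} with $H=\mathcal D^{1,2}(\R^N)$, $D=G$ and $K$ the Kelvin transform; Lemmas~\ref{lem 2.2}, \ref{lem 2.3} and \ref{lem 3.1} verify its hypotheses. This produces $w^{(0)}\in\mathcal D_K$, profiles $w^{(n)}\ne0$ and $g_k^{(n)}=T_{y_k^{(n)},\lambda_k^{(n)}}$ such that \eqref{4.2}--\eqref{4.4} hold. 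These are exactly \eqref{1.4}--\eqref{1.6}.

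Next I identify the profiles. Since $I'(u_k)\to0$ and $u_k\rightharpoonup w^{(0)}$, weak continuity of $u\mapsto a|u|^{2^*-2}u$ (a.e.\ convergence plus local compactness) gives $I'(w^{(0)})=0$, so $w^{(0)}\in\mathcal K_K$. For $n\ge1$, set $v_k=g_k^{(n)*}u_k\rightharpoonup w^{(n)}$ and test $I'(u_k)$ against $g_k^{(n)}\varphi$. This gives
\[
\int\nabla v_k\cdot\nabla\varphi-\int a\bigl(2^{-\lambda_k}x+y_k\bigr)|v_k|^{2^*-2}v_k\varphi\to0,
\]
where the coefficient is obtained by undoing $T_{y_k,\lambda_k}$. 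Passing to a subsequence, I split into three cases. If $\lambda_k\to+\infty$ and $y_k\to y$, the coefficient converges locally uniformly to $a(y)$. If $\lambda_k\to-\infty$ or $|y_k|\to\infty$, the points concentrate at infinity, where $a\to a(0)=1$. If $\lambda_k$ stays bounded, then $|y_k|\to\infty$ and the coefficient again tends to $1$ locally uniformly. In every case I set $a^{(n)}$ equal to the limiting value, which lies in $(0,1]$ by (a1). Passing to the limit on compact supports then yields $w^{(n)}\in\mathcal K_{a^{(n)}}$. By \eqref{4.1}, each nonzero profile carries energy $\frac1N\|w^{(n)}\|^2\ge\frac1N S^{N/2}$, so \eqref{1.7} forces $n_0<\infty$.

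Finally, the main step is upgrading \eqref{1.8} to strong convergence \eqref{4.5}, and from it deducing \eqref{4.6}. Let $r_k$ denote the remainder in \eqref{1.8}. First, $G$-weak convergence to $0$ of a bounded sequence implies $r_k\to0$ in $L^{2^*}$. This is the standard fact in \cite{Tintarev07} that $\mathcal D^{1,2}\hookrightarrow L^{2^*}$ is cocompact with respect to $G$, and I would cite or reprove it. Second, I expand
\[
\|r_k\|^2=(u_k,r_k)-\Bigl(w^{(0)}+\sum_n(g_k^{(n)}w^{(n)}+Kg_k^{(n)}w^{(n)}),\,r_k\Bigr).
\]
Here $(u_k,r_k)=\int a|u_k|^{2^*-2}u_kr_k+o(1)\to0$ by Hölder and $r_k\to0$ in $L^{2^*}$. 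The remaining term tends to $0$ because $g_k^{(n)*}r_k\rightharpoonup0$ and $K$ is self-adjoint, so $(Kg_k^{(n)}w,r_k)=(g_k^{(n)}w,Kr_k)=(g_k^{(n)}w,r_k)$, using $r_k\in\mathcal D_K$.

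For the energy identity \eqref{4.6}, the quadratic part splits by \eqref{4.3}--\eqref{4.4}. The $L^{2^*}$ part requires a Brezis--Lieb type splitting: the bumps $g_k^{(n)}w^{(n)}$ and $Kg_k^{(n)}w^{(n)}$ are asymptotically disjoint, which I prove by approximating the profiles with $C_0^\infty$ functions. Each bump $g_k^{(n)}w^{(n)}$ contributes $\frac{a^{(n)}}{2^*}\|w^{(n)}\|_{2^*}^{2^*}$ by the same coefficient limit as above. Its Kelvin image contributes the same amount by Lemma~\ref{lem 3.1}(iii), which accounts for the factor $2$. The expected main obstacle is this bookkeeping of the coefficient limits in the nonlinear term, together with the asymptotic orthogonality of the $2^*$-powers.
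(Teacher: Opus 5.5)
Your proposal is correct, and its skeleton matches the paper's: boundedness of the (PS)$_d$ sequence, Theorem~\ref{thm 1.8} with $D=G$, identification of the profiles by testing $I'(u_k)$ against $g_k^{(n)}\varphi$, finiteness of $n_0$ from \eqref{1.7} and \eqref{4.1}, and $r_k\to0$ in $L^{2^*}$ via Lemma~\ref{lem 4.2}. You argue differently in two places: the upgrade to \eqref{4.5} and the splitting of the $L^{2^*}$ term.

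\textbf{The upgrade to \eqref{4.5}.} The paper subtracts $I'(w^{(0)})r_k$, $I'_{a^{(n)}}(g_k^{(n)}w^{(n)})r_k$ and $I'_{a^{(n)}}(Kg_k^{(n)}w^{(n)})r_k$. These vanish because the profiles are critical and $I_{a^{(n)}}$ is invariant under $G$ and $K$; the quadratic parts then recombine into $\|r_k\|^2$. You instead kill the pairing of $r_k$ with the profile sum directly from $r_k\overset{G}{\rightharpoonup}0$, $Kr_k=r_k$ and $K^*=K$. This never uses criticality of the profiles, so it is slightly more economical.

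\textbf{The $L^{2^*}$ splitting.} The paper applies its Brezis--Lieb variant (Lemma~\ref{lem 4.3}) successively to $u_k$ itself, checking the weak-convergence hypothesis at each stage via \eqref{4.8}--\eqref{4.10}, and then uses $\int a|r_k|^{2^*}\to0$. Almost-everywhere convergence does all the work, so no pairwise interaction estimates are needed. You use \eqref{4.5} to replace $u_k$ by the finite sum of bumps, and then need $\int|f_k|^{2^*-1}|h_k|\to0$ for every pair of distinct bumps.

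For the mixed pairs $g_k^{(n)}w^{(n)}$, $Kg_k^{(m)}w^{(m)}$, the relative operator $g_k^{(n)*}Kg_k^{(m)}$ is not in $G$, so literal disjointness of supports is not available. The clean way to carry out your plan is as follows.
\begin{enumerate}
\item A change of variables gives $\int|g\phi|^{2^*-1}|h\psi|\,dx=\int|\phi|^{2^*-1}|g^*h\psi|\,dx$ for $g,h\in G\cup KG$.
\item Approximate $\phi$ in $L^{2^*}$ by a bounded, compactly supported function.
\item Use $g_k^*h_k\psi\rightharpoonup0$ in $\mathcal{D}^{1,2}(\R^N)$, which follows from \eqref{4.3}--\eqref{4.4}, together with Rellich compactness on the support.
\end{enumerate}
Combining this with the elementary bound $\bigl||\sum_i f_i|^{2^*}-\sum_i|f_i|^{2^*}\bigr|\le C\sum_{i\ne j}|f_i|^{2^*-1}|f_j|$, your splitting goes through. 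The paper's Lemma~\ref{lem 4.3} packages the same information more compactly; your version is more hands-on.
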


In the proof of Proposition \ref{prop 4.1}, we use the following two lemmas. 
The first lemma shows that $G$-weak convergence implies strong convergence in $L^{2^*}(\R^N)$. 
Although the proof follows from arguments in \cite{Tintarev07}, we include it in Appendix \ref{Appendix A} for completeness.

\begin{lem}
\label{lem 4.2}
Let $({u}_k) \subset \mathcal{D}^{1,2}(\mathbb{R}^N)$. 
If ${u}_k \overset{G}{\rightharpoonup} 0$, then ${u}_k \to 0 \text{ in } L^{2^*}(\mathbb{R}^N)$. 
\end{lem}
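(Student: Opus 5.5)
Implicitly the sequence $(u_k)$ is bounded in $\mathcal{D}^{1,2}(\mathbb{R}^N)$: $G$-weak convergence gives only weak convergence, so boundedness follows from the uniform boundedness principle. I will prove the lemma with the standard Lions-type argument from \cite{Tintarev07}. The argument has two parts: a localized Sobolev inequality that converts smallness of $u_k$ at every dyadic scale into smallness of the $L^{2^*}$ norm, and a compactness step that turns $G$-weak convergence into that uniform smallness.

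\emph{Step 1 (localization).} Fix $\chi\in C_0^\infty(\mathbb{R}^N)$, $0\le\chi\le1$, with $\sum_{z\in\mathbb{Z}^N}\chi(\cdot-z)^{2}\equiv 1$ (or simply use unit cubes $Q_z=z+[0,1)^N$). For a single cube, the Sobolev inequality on bounded domains (applied to $\chi_z u$) gives
\begin{equation*}
\int_{Q_z}|u|^{2^*}\le C\Big(\int_{\tilde Q_z}(|\nabla u|^2+|u|^2)\Big)\Big(\int_{\tilde Q_z}|u|^{2^*}\Big)^{1-\frac{2}{2^*}},
\end{equation*}
where $\tilde Q_z$ is a slightly enlarged cube. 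Summing over $z$ and bounding the second factor by its supremum yields
\begin{equation*}
\|u\|_{2^*}^{2^*}\le C\Big(\|\nabla u\|_2^2+\int\cdots\Big)\sup_z\|u\|_{L^{2^*}(\tilde Q_z)}^{2^*-2}.
\end{equation*}
This contains an $L^2$ term, which is not available in $\mathcal{D}^{1,2}$. I will therefore use a dyadic decomposition instead. Set $u=\sum_j P_j u$, where $P_j$ is a truncation of $u$ at levels $2^{j}$, as in Tintarev--Fieseler. Then $|\{|u|>2^j\}|$ is finite, which restores an $L^2$-type control at each level, and the level scaling is exactly the dilation $\delta_\lambda$. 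This produces
\begin{equation*}
\|u\|_{2^*}^{2^*}\le C\|u\|_{\mathcal{D}^{1,2}}^2\ \sup_{y\in\mathbb{R}^N,\ \lambda\in\mathbb{Z}}\|T_{y,\lambda}^{-1}u\|_{L^{2^*}(Q)}^{2^*-2}\quad(\ast)
\end{equation*}
for a fixed bounded set $Q$, or an equivalent form with an $L^p(Q)$ norm with $p<2^*$ on the right. I expect establishing $(\ast)$ to be the main obstacle. My first attempt will follow the truncation argument of \cite{Tintarev07}. The alternative is an improved Sobolev inequality via Besov spaces, $\|u\|_{2^*}\le C\|u\|_{\mathcal{D}^{1,2}}^{2/2^*}\|u\|_{\dot B^{-(N-2)/2}_{\infty,\infty}}^{1-2/2^*}$ (Gérard--Meyer--Oru), whose Besov norm is a supremum of pairings $(T_{y,\lambda}^{-1}u,\psi)$ for a fixed wavelet $\psi$.

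\emph{Step 2 (conclusion).} Choose $(y_k,\lambda_k)$ nearly attaining the supremum in $(\ast)$ for $u_k$, and set $g_k=T_{y_k,\lambda_k}\in G$. By hypothesis, $g_k^*u_k\rightharpoonup0$ weakly in $\mathcal{D}^{1,2}$. Rellich compactness on the bounded set $Q$ then gives $g_k^*u_k\to0$ in $L^p(Q)$ for $p<2^*$. In the Besov version, weak convergence directly gives $(g_k^*u_k,\psi)\to0$. Since $g_k^*=T_{y_k,\lambda_k}^{-1}$, the supremum in $(\ast)$ tends to $0$. Because $\|u_k\|_{\mathcal{D}^{1,2}}$ is bounded, $(\ast)$ yields $\|u_k\|_{2^*}\to0$.

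If $(\ast)$ is obtained only with the $L^{2^*}(Q)$ norm on the right, compactness is unavailable. In that case I will interpolate between $L^2(Q)$ and $L^{2^*}(Q)$ and use Hölder's inequality to reduce to a subcritical local norm, to which Rellich applies.
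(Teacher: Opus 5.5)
Your outline (truncate $u$ at dyadic levels, use a localized Sobolev inequality on unit cubes, rescale each level band to level one, sum over levels, then pick a near-maximizing dislocation and apply Rellich) is the same strategy as the paper's Appendix A. However, the one inequality that carries all the content, $(\ast)$, is never proved, and the form you wrote cannot work. With the untruncated quantity $\|T_{y,\lambda}^{-1}u\|_{L^{2^*}(Q)}$ on the right, $(\ast)$ is just the Sobolev inequality. Indeed, $\int_Q|T_{y,\lambda}^{-1}u|^{2^*}\,dx=\int_{2^{-\lambda}Q+y}|u|^{2^*}\,dx$, so letting $\lambda\to-\infty$ with $y$ chosen to center the cube shows that the supremum equals $\|u\|_{2^*}$. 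Proving that this supremum tends to $0$ along a $G$-weakly null sequence is therefore the lemma itself. Your fallback does not help either. Interpolation and H\"older bound $L^p(Q)$ norms, $p<2^*$, in terms of $L^{2^*}(Q)$, never the reverse. A bubble $U_\varepsilon(\cdot-x_0)$ with $x_0$ interior to $Q$ and $\varepsilon\to0$ keeps its $L^{2^*}(Q)$ norm bounded below while every $L^p(Q)$ norm with $p<2^*$ tends to $0$.

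The missing idea is that the truncation must stay inside the local quantity on the right-hand side, not merely be used in deriving the inequality. The paper fixes $\chi\in C_0^\infty(\mathbb{R},[0,\infty))$ with $\chi(t)=|t|$ for $1\le|t|\le2$ and $\chi(t)=0$ for $|t|\notin[\frac12,4]$. It then proves, for $\|\nabla u\|_2\le M$,
\[
\|u\|_{2^*}^{2^*}\le C\Big(\sup_{j\in\mathbb{Z}}\,\sup_{y\in\mathbb{Z}^N}\|\chi(\widetilde{T}_{y,j}u)\|_{L^2(Q)}\Big)^{2-\frac{4}{2^*}}.
\]
Two elementary properties of $\chi$ do the work.
\begin{enumerate}
\item Boundedness gives $\chi^{2^*}\le C_1\chi^2$. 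So the critical local norm of the truncation is controlled by its $L^2(Q)$ norm, which is exactly the conversion that is impossible for $u$ itself.
\item Since $\chi(t)\neq0$ forces $|t|\ge\frac12$, one has $\chi^2\le C_2(\chi_{-1}^{2^*}+\chi_0^{2^*}+\chi_1^{2^*})$ with $\chi_k(t)=2^k\chi(2^{-k}t)$. Sobolev applied to each $\chi_k(u)$ then bounds $\|\chi(u)\|_{H^1(\mathbb{R}^N)}^2$ by powers of $\|\nabla u\|_{L^2(\{2^{-2}\le|u|\le2^3\})}$. This is the precise form of your remark that $|\{|u|>2^j\}|<\infty$ restores $L^2$ control.
\end{enumerate}
Summing the unit-cube Sobolev inequality over cubes, rescaling by $\widetilde{\delta}_j$, and summing over $j$ (the bands have bounded overlap) gives the display. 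Since $\chi$ is Lipschitz with $\chi(0)=0$, Rellich yields $\|\chi(\widetilde{T}_{y_k,j_k}u_k)\|_{L^2(Q)}\le L\,\|\widetilde{T}_{y_k,j_k}u_k\|_{L^2(Q)}\to0$, and your Step 2 then goes through. An untruncated Morrey-type inequality with a subcritical local norm does hold, but it is a deeper improved Sobolev inequality, not a reformulation of this one.

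Your Besov alternative, by contrast, is sound. Let $\Phi$ be the reflected Littlewood--Paley kernel and $-\Delta\psi=\Phi$; then $\psi$ is Schwartz, because $\widehat{\Phi}$ vanishes near the origin. The $j$-th Littlewood--Paley piece satisfies $2^{-j\frac{N-2}{2}}(P_ju)(y)=(T_{y,j}^{-1}u,\psi)_{\mathcal{D}^{1,2}}$. Hence the $\dot B^{-\frac{N-2}{2}}_{\infty,\infty}$ norm is exactly $\sup_{y,j}|(T_{y,j}^{-1}u,\psi)_{\mathcal{D}^{1,2}}|$, and a near-maximizing $g_k$ plus the G\'erard--Meyer--Oru inequality finishes the proof. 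That route is correct but imports the whole difficulty as a black box. The paper's appendix instead proves the needed refined Sobolev inequality from scratch, using only truncation, Sobolev on cubes, and scaling.
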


The second lemma is a variant of the Brezis--Lieb lemma \cite{Brezis--Lieb}. 
Here, set $KG=\left\{Kg\mid g\in G\right\}$.

\begin{lem}
\label{lem 4.3}
Let $a \in L^\infty(\mathbb{R}^N)$, $(g_k)\subset G\cup KG$, and $(v_k)\subset \mathcal{D}^{1,2}(\R^N)$.
Suppose that $g_k^*v_k \rightharpoonup v_0$ weakly in $\mathcal{D}^{1,2}(\mathbb{R}^N)$ for some $v_0\in \mathcal{D}^{1,2}(\R^N)$.
Then we have
\begin{equation}\label{4.7}
	\int_{\mathbb{R}^N} a(x) |v_k - g_kv_0|^{2^*} \, dx 
	= \int_{\mathbb{R}^N} a(x) |v_k|^{2^*} \, dx - \int_{\mathbb{R}^N} a(x) |g_kv_0|^{2^*} \, dx + o(1)
\end{equation}
as $k \to \infty$.
\end{lem}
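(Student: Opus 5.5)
The plan is to reduce the statement to the classical Brezis--Lieb lemma by pulling everything back through the unitary $g_k$, exploiting that every element of $G\cup KG$ is an isometry of $L^{2^*}(\mathbb{R}^N)$ of the form $(hu)(x)=J_h(x)^{\frac{N-2}{2N}}u(\phi_h(x))$ for a conformal diffeomorphism $\phi_h$ (of $\mathbb{R}^N$, or of $\mathbb{R}^N\setminus\{\text{pt}\}$ for the $KG$ part) with Jacobian $J_h$.

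\textbf{Step 1.} Set $w_k:=g_k^*v_k$. Then $w_k\rightharpoonup v_0$ weakly in $\mathcal{D}^{1,2}(\mathbb{R}^N)$, so $(w_k)$ is bounded. By Rellich on bounded sets, after passing to a subsequence, $w_k\to v_0$ a.e. We must argue along subsequences, and this is harmless: if \eqref{4.7} failed, some subsequence would keep a positive distance from it, and the argument below applies to that subsequence.

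\textbf{Step 2.} Change variables. For $h=g_k$, writing $v_k=g_kw_k$ and $g_kv_0$ in the form above, the substitution $y=\phi_h(x)$ (whose Jacobian cancels the weight $J_h$, exactly as in Lemma \ref{lem 3.1}(iii)) gives
\[
\int a(x)|v_k-g_kv_0|^{2^*}dx=\int a_k(y)|w_k-v_0|^{2^*}dy ,
\]
and similarly for the other two integrals, where $a_k:=a\circ\phi_{g_k}^{-1}$. Since $|a_k|\le\|a\|_\infty$ uniformly in $k$, the claim reduces to
\[
\int a_k\bigl(|w_k-v_0|^{2^*}-|w_k|^{2^*}+|v_0|^{2^*}\bigr)dy\to0 .
\]

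\textbf{Step 3.} Apply the standard Brezis--Lieb estimate. For every $\varepsilon>0$,
\[
\bigl||s-t|^{p}-|s|^p+|t|^p\bigr|\le \varepsilon|s|^p+C_\varepsilon|t|^p .
\]
Hence
\[
f_k^\varepsilon:=\bigl(\bigl||w_k-v_0|^{2^*}-|w_k|^{2^*}+|v_0|^{2^*}\bigr|-\varepsilon|w_k|^{2^*}\bigr)^+\le C_\varepsilon|v_0|^{2^*},
\]
and $f_k^\varepsilon\to0$ a.e. by Step 1. By dominated convergence, $\int f_k^\varepsilon\to0$. Therefore
\[
\limsup_k\Bigl|\int a_k(\cdots)\Bigr|\le\|a\|_\infty\,\varepsilon\,\sup_k\|w_k\|_{2^*}^{2^*},
\]
where the supremum is finite by Sobolev's inequality. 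Letting $\varepsilon\to0$ finishes the proof. This argument never requires convergence of $a_k$, only its uniform bound.

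\textbf{Main obstacle.} The main difficulty is Step 2, in particular verifying that elements of $KG$ are genuine weighted compositions for which the change of variables is valid. The key point is that the pointwise absolute value commutes with $g_k$, so that $|g_kw|^{2^*}=J|w\circ\phi|^{2^*}$. I would check this on $C_0^\infty(\mathbb{R}^N\setminus\{0\})$, using the explicit formulas for $T_{y,\lambda}$ and the computation in Lemma \ref{lem 3.1}, and then extend by density together with the $L^{2^*}$-continuity of all the operators involved.
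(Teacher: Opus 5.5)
Your proof is correct and follows essentially the same route as the paper. The paper also pulls back by $g_k$ via its weighted-composition structure (which it invokes only implicitly, under the phrase ``isometries on $L^{2^*}$''), passes to an a.e.\ convergent subsequence, applies Brezis--Lieb, and finishes with a subsequence argument. The only difference is cosmetic: the paper first bounds $|a|\le\|a\|_\infty$ in the original variables and changes variables only in the unweighted $L^1$ Brezis--Lieb defect, citing the lemma directly, whereas you carry the transformed coefficient $a\circ\phi_{g_k}^{-1}$ along and reprove Brezis--Lieb with a uniformly bounded weight.
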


\begin{proof}
Since $(g_k)\subset G\cup KG$ consists of isometries on $L^{2^*}(\mathbb{R}^N)$, we have
\begin{align*}
	\int_{\R^N}\left||v_k|^{2^*} - |v_k - g_kv_0|^{2^*} - |g_kv_0|^{2^*}\right|\, dx 
	= \int_{\R^N}\left||g_k^* v_k|^{2^*} - |g_k^*v_k - v_0|^{2^*} - |v_0|^{2^*}\right|\, dx.
\end{align*}
Passing to a subsequence, we may assume that $g_k^*v_k \to v_0$ a.e. $x\in \R^N$. 
By the Brezis-Lieb lemma \cite{Brezis--Lieb}, we have
\begin{align*}
	\int_{\R^N}\left||g_k^* v_k|^{2^*} - |g_k^*v_k - v_0|^{2^*} - |v_0|^{2^*}\right|\, dx = o(1)
\end{align*}
as $k\to \infty$.
Therefore we obtain
\begin{align*}
	& \left|\int_{\mathbb{R}^N} a(x) |v_k|^{2^*} \, dx - \int_{\mathbb{R}^N} a(x) |v_k - g_kv_0|^{2^*} \, dx 
	- \int_{\mathbb{R}^N} a(x) |g_kv_0|^{2^*} \, dx\right| \\
	& \le \|a\|_\infty \int_{\R^N}\left||v_k|^{2^*} - |v_k - g_kv_0|^{2^*} - |g_kv_0|^{2^*}\right|\, dx = o(1).
\end{align*}
Since the above argument applies to every subsequence, the original sequence also converges to $0$.
This implies \eqref{4.7}.
\end{proof}

\begin{proof}[Proof of Proposition \ref{prop 4.1}]We recall that the assumptions of Theorem \ref{thm 1.8} are satisfied with $H=\mathcal{D}^{1,2}(\mathbb{R}^N)$, $D=G$, and $K$ equal to the Kelvin transform on $\mathcal{D}^{1,2}(\mathbb{R}^N)$. 
Let $(u_k) \subset \mathcal{D}_K$ be a (PS)$_d$ sequence for $I$. 
Then
\begin{align*}
\|u_k\|_{\mathcal{D}^{1,2}}^2 &= N\left(I(u_k) - \frac{1}{2^*} I^\prime(u_k)u_k\right)\\
&\le Nd + \frac{N}{2^*} \|I^\prime(u_k)\|_{(\mathcal{D}^{1,2}(\R^N))^*} \|u_k\|_{\mathcal{D}^{1,2}} + o(1).
\end{align*}
Hence  $(u_k)$ is bounded in $\mathcal{D}^{1,2}(\mathbb{R}^N)$. 
By Theorem \ref{thm 1.8}, up to a subsequence, there exist $n_0 \in \mathbb{N} \cup \{0, \infty\}$, ${w}^{(0)} \in \mathcal{D}_K$, 
${g}_k^{(0)} = id$, and for $n\in \{1,\dots,n_0\}$, there exist 
${w}^{(n)} \in \mathcal{D}^{1,2}(\mathbb{R}^N) \setminus \{0\}$ and $({g}_k^{(n)}) \subset G$ such that
\begin{align}
\label{4.8}
& {{g}_k^{(n)}}^* {u}_k \rightharpoonup {w}^{(n)} \text{ weakly in } \mathcal{D}^{1,2}(\mathbb{R}^N) \text{ for all } n\in \{0,1,\dots,n_0\},\\
\label{4.9}
& {{g}_k^{(n)}}^* {g}_k^{(m)} \rightharpoonup 0 \text{ weakly for all } n,m\in \{0,1,\dots,n_0\} \text{ with } n\neq m,\\
\label{4.10}
& {{g}_k^{(n)}}^* K {g}_k^{(m)} \rightharpoonup 0 \text{ weakly for all } n,m\in \{1,\dots,n_0\},\\
\label{4.11}
& \|{w}^{(0)}\|_{\mathcal{D}^{1,2}}^2 + 2 \sum_{n=1}^{n_0} \|{w}^{(n)}\|_{\mathcal{D}^{1,2}}^2 \le \limsup_{k\to\infty} \|{u}_k\|_{\mathcal{D}^{1,2}}^2.
\end{align}
We divide the proof into several steps.

\medskip

\noindent
\textbf{Step 1.} \textit{We have $n_0 < \infty$. Moreover, $w^{(0)}\in \mathcal{K}_K$, 
and for each $n\in \{1,\dots,n_0\}$, there exists $a^{(n)} \in (0, 1]$ such that $w^{(n)}\in \mathcal{K}_{a^{(n)}}\setminus\{0\}$. }
\begin{proof}[Proof of Step 1.]
From the definition of $G$, for each $n\in \{1,\dots,n_0\}$, there exist $(y_k^{(n)}) \subset \mathbb{R}^N$ and $(\lambda_k^{(n)}) \subset \R $ such that $g_k^{(n)} = T_{y_k^{(n)},\lambda_k^{(n)}}$. 
Let $\varphi \in \mathcal{D}^{1,2}(\mathbb{R}^N)$.
Since $(u_k)$ is a (PS)$_d$ sequence, for each $n\in \{0,1,\dots,n_0\}$, we have
\begin{align*}
o(1) &= I^\prime(u_k)(g_k^{(n)}\varphi)\\
&= \int_{\mathbb{R}^N} \nabla u_k \cdot \nabla (g_k^{(n)}\varphi) \, dx - \int_{\mathbb{R}^N} a(x) |u_k|^{2^*-2}u_k(g_k^{(n)}\varphi) \, dx\\
&= \int_{\mathbb{R}^N} \nabla {g_k^{(n)}}^* u_k \cdot \nabla \varphi \, dx - \int_{\mathbb{R}^N} a(2^{-\lambda_k^{(n)}} x+y_k^{(n)}) |{g_k^{(n)}}^* u_k|^{2^*-2}({g_k^{(n)}}^* u_k)\varphi \, dx.
\end{align*}
If $n = 0$, then $g_k^{(0)}=id$ and $g_k^{(0)*}u_k=u_k \rightharpoonup {w}^{(0)} $ weakly in $\mathcal{D}^{1,2}(\mathbb{R}^N)$. 
Thus, we have
\begin{align*}
\int_{\mathbb{R}^N} \nabla w^{(0)} \cdot \nabla \varphi \, dx - \int_{\mathbb{R}^N} a(x) |w^{(0)}|^{2^*-2}w^{(0)}\varphi \, dx = 0.
\end{align*}
In particular, $w^{(0)}\in \mathcal{K}_K$. 
Next, let $n\in \{1,\dots,n_0\}$. 
Since $g_k^{(n)} \rightharpoonup 0$ weakly by \eqref{4.9}, 
Lemma \ref{lem 2.2} yields $|y_k^{(n)}| + |\lambda_k^{(n)}| \to \infty$. 
Passing to a subsequence, we may assume that either $|y_k^{(n)}| \to \infty$, or $|\lambda_k^{(n)}|\to \infty$ and  $y_k^{(n)} \to y^{(n)}$.
If $\lambda_k^{(n)} \to -\infty$ or $|y_k^{(n)}| \to \infty$, then $a(2^{-\lambda_k^{(n)}}x+y_k^{(n)}) \to 1$ a.e. $x\in \R^N$.
By \eqref{4.8}, we obtain
\begin{align*}
\int_{\mathbb{R}^N} \nabla w^{(n)} \cdot \nabla \varphi \, dx - \int_{\mathbb{R}^N} |w^{(n)}|^{2^*-2}w^{(n)}\varphi \, dx = 0.
\end{align*}
Hence $a^{(n)} := 1$ and $w^{(n)}\in \mathcal{K}_{a^{(n)}}\setminus\{0\}$. 
On the other hand, if $\lambda_k^{(n)} \to \infty$ and $y_k^{(n)} \to y^{(n)}$, then $a(2^{-\lambda_k^{(n)}}x+y_k^{(n)}) \to a(y^{(n)})$  a.e. $x \in \mathbb{R}^N$. By \eqref{4.8}, we have
\begin{align*}
\int_{\mathbb{R}^N} \nabla w^{(n)} \cdot \nabla \varphi \, dx - \int_{\mathbb{R}^N} a(y^{(n)}) |w^{(n)}|^{2^*-2}w^{(n)}\varphi \, dx = 0.
\end{align*}
Hence $a^{(n)} := a(y^{(n)})\in (0,1]$ and $w^{(n)}\in \mathcal{K}_{a^{(n)}}\setminus\{0\}$.  
By \eqref{4.1}, we have 
\begin{equation*}
\|w^{(n)}\|_{\mathcal{D}^{1,2}}^2 \ge S^{\frac{N}{2}} \quad \text{for all } n \in \{1,\dots,n_0\} 
\end{equation*}
Combining this with \eqref{4.11}, we conclude that $n_0 < \infty$. 
\end{proof}

\noindent
\textbf{Step 2.} \textit{\eqref{4.5} holds.}
\begin{proof}[Proof of Step 2.]
Set
\begin{align*}
r_k \coloneqq {u}_k - {w}^{(0)} - \sum_{n=1}^{n_0} ({g}_k^{(n)}{w}^{(n)} + {K}{g}_k^{(n)}{w}^{(n)}).
\end{align*}
Since $n_0 < \infty$, by \eqref{1.8} in Theorem \ref{thm 1.8}, we have ${r}_k \overset{G}{\rightharpoonup} 0$. Thus, Lemma \ref{lem 4.2} implies 
\begin{align}
\label{4.12}
r_k \to 0 \quad \text{in } L^{2^*}(\mathbb{R}^N).
\end{align}
Since ${u}_k$ is a (PS)$_d$ sequence, we have $I^\prime({u}_k)r_k \to 0$. Furthermore, $w^{(0)} \in \mathcal{K}_K$ implies $I^\prime({w}^{(0)})({r}_k) = 0$. Moreover, for each $n\in \{1,\dots,n_0\}$, since $w^{(n)} \in \mathcal{K}_{a^{(n)}}$ and $I_{a^{(n)}}$ is invariant under conformal transformations, we have
\begin{align*}
& I^\prime_{a^{(n)}}(g_k^{(n)}w^{(n)})r_k = I^\prime_{a^{(n)}}({w}^{(n)})({{g}_k^{(n)}}^*{r}_k) = 0,\\
& I^\prime_{a^{(n)}}(Kg_k^{(n)}w^{(n)})r_k = I^\prime_{a^{(n)}}({w}^{(n)})({{g}_k^{(n)}}^* {K}{r}_k) = 0.
\end{align*}
Therefore, we obtain 
\begin{align*}
o(1) &= I^\prime(u_k)r_k - I^\prime(w^{(0)})r_k - \sum_{n=1}^{n_0} \left\{I^\prime_{a^{(n)}}(g_k^{(n)} w^{(n)})r_k + I^\prime_{a^{(n)}}(K g_k^{(n)} w^{(n)})r_k\right\}\\
&= \|r_k\|^2_{\mathcal{D}^{1,2}} - \int_{\mathbb{R}^N} a(x) |u_k|^{2^*-2}u_kr_k \, dx + \int_{\mathbb{R}^N} a(x) |w^{(0)}|^{2^*-2}w^{(0)}r_k \, dx\\
&\qquad\qquad + \sum_{n=1}^{n_0} \int_{\mathbb{R}^N} a^{(n)} |g_k^{(n)}w^{(n)}|^{2^*-2}g_k^{(n)}w^{(n)}r_k \, dx\\
&\qquad\qquad + \sum_{n=1}^{n_0} \int_{\mathbb{R}^N} a^{(n)} |Kg_k^{(n)}w^{(n)}|^{2^*-2}Kg_k^{(n)}w^{(n)}r_k \, dx
\end{align*}
By \eqref{4.12}, all integral terms are $o(1)$. Hence, $\|r_k\|^2_{\mathcal{D}^{1,2}} = o(1)$. Therefore, \eqref{4.5} holds. 
\end{proof}

\noindent
\textbf{Step 3.} \textit{\eqref{4.6} holds. }
\begin{proof}[Proof of Step 3.]
First, using \eqref{4.8}--\eqref{4.10}, we deduce from \eqref{4.5} that
\begin{align}
\label{4.13}
\lim_{k\to\infty} \|u_k\|_{\mathcal{D}^{1,2}}^2 = \|{w}^{(0)}\|_{\mathcal{D}^{1,2}}^2 + 2 \sum_{n=1}^{n_0} \|{w}^{(n)}\|_{\mathcal{D}^{1,2}}^2.
\end{align}
Next, we prove that
\begin{align}
\label{4.14}
	\lim_{k\to\infty} \int_{\mathbb{R}^N} a(x) |u_k|^{2^*} \, dx 
	= \int_{\mathbb{R}^N} a(x) |w^{(0)}|^{2^*} \, dx + 2\sum_{n=1}^{n_0} \int_{\mathbb{R}^N} a^{(n)} |w^{(n)}|^{2^*} \, dx
\end{align}
By \eqref{4.12}, we have
\begin{equation}\label{4.15}
	\int_{\mathbb{R}^N} a(x) |r_k|^{2^*}\, dx
%	=\int_{\mathbb{R}^N} a(x) \left| u_k - w^{(0)} - \sum_{n=1}^{n_0} (g_k^{(n)}w^{(n)} + Kg_k^{(n)}w^{(n)})\right|^{2^*} \, dx 
	= o(1).
\end{equation}
Using \eqref{4.8}--\eqref{4.10},
Lemma \ref{lem 4.3} can be applied successively
to remove each profile from the decomposition of $u_k$.
Consequently,
\begin{align}
	\int_{\R^N} a(x) |r_k|^{2^*}\, dx
	= & \int_{\R^N} a(x) \left| u_k \right|^{2^*} \, dx 
	- \int_{\R^N} a(x) |w^{(0)}|^{2^*} \, dx \notag \\
	& - \sum_{n=1}^{n_0} \int_{\mathbb{R}^N} a(x) |g_k^{(n)}w^{(n)}|^{2^*} \, dx 
	- \sum_{n=1}^{n_0} \int_{\mathbb{R}^N} a(x) |Kg_k^{(n)}w^{(n)}|^{2^*} \, dx + o(1) \label{4.16}.
\end{align}
From a change of variables and the definition of $a^{(n)}$, it follows  
\begin{align}
	& \int_{\R^N} a(x) |g_k^{(n)}w^{(n)}|^{2^*} \, dx 
	 =\int_{\R^N} a(2^{-\lambda_k^{(n)}} x + y_k^{(n)}) |w^{(n)}|^{2^*} \, dx 
	= a^{(n)} \|w^{(n)}\|_{2^*}^{2^*} + o(1), \label{4.17} \\
	& \int_{\R^N} a(x) |Kg_k^{(n)}w^{(n)}|^{2^*} \, dx 
	=\int_{\R^N} a(x) |g_k^{(n)}w^{(n)}|^{2^*} \, dx 
	= a^{(n)} \|w^{(n)}\|_{2^*}^{2^*} + o(1). \label{4.18}
\end{align}
Combining \eqref{4.15}, \eqref{4.16}, \eqref{4.17}, and \eqref{4.18}, we obtain \eqref{4.14}.
From \eqref{4.13} and \eqref{4.14}, it follows \eqref{4.6}.
\end{proof}
Steps 1--3 complete the proof.
\end{proof}

\begin{proof}[Proof of Theorem \ref{thm 1.2}]
Since $c_K$ is characterized as the mountain pass value of $I$,
there exists a (PS)$_{c_K}$ sequence $(u_k)$ for $I$. 
Passing to a subsequence as in Proposition \ref{prop 4.1}, by \eqref{4.1} and \eqref{4.6}, we obtain
\begin{equation*}
	c_K = \lim_{k\to\infty} I(u_k) = I(w^{(0)}) + 2\sum_{n=1}^{n_0} I_{a^{(n)}}(w^{(n)}) \ge \frac{2 n_0}{N}S^{\frac{N}{2}}.
\end{equation*}
On the other hand, Proposition \ref{prop 3.2} yields $c_K < \frac{2}{N}S^{\frac{N}{2}}$. 
Therefore, we must have $n_0 = 0$, and by \eqref{4.5}, we obtain ${u}_k \to {w}^{(0)}$ in $ \mathcal{D}^{1,2}(\mathbb{R}^N)$. 
Hence, $w^{(0)} \in \mathcal{D}_K$ is a minimizer of $c_K$ and a Kelvin-invariant solution of \eqref{Pa}.
Replacing $w^{(0)}$ by $|w^{(0)}|$ if necessary, we may assume that $w^{(0)}$ is nonnegative.
By the strong maximum principle, $w^{(0)}$ is positive.
Therefore, the proof is complete.
\end{proof}

\subsection*{Acknowledgements}
This work was supported by JSPS KAKENHI Grant Numbers JP20K03691.

\begin{appendix}

\section{Proof of Lemma \ref{lem 4.2}}
\label{Appendix A}

In this appendix, we provide the proof of Lemma \ref{lem 4.2}. Let $\chi \in C_0^\infty\left(\mathbb{R},[0,\infty)\right)$ such that 
\begin{align*}
	& \chi(t) = |t| \quad \text{ if } |t|\in [1,2],\\
	& \chi(t) = 0 \quad \text{ if } |t| \not\in \left[\frac{1}{2},4\right].
\end{align*}
For $k \in \mathbb{Z}$, set
\begin{equation*}
 \chi_k(t) = 2^k \chi(2^{-k}t).
\end{equation*}
Then, there exists $C_1$, $C_2 > 0$ such that for all $t \in \mathbb{R}$,
\begin{align}
\label{A.1}
\chi(t)^{2^*} &\le C_1 \chi(t)^2,\\
\label{A.2}
\chi(t)^{2} &\le C_2 \left(\chi_{-1}(t)^{2^*} + \chi_{0}(t)^{2^*} + \chi_{1}(t)^{2^*} \right).
\end{align}
Furthermore, let $Q = [0,1]^N$, and for each $y = (y_1, \dots, y_N) \in \mathbb{Z}^N$, define
\begin{equation*}
Q + y = [y_1, y_1+1] \times \cdots \times [y_N, y_N+1].
\end{equation*}
For $u:\mathbb{R}^N \to \mathbb{R}$ and $a < b$, we use the notation 
\begin{equation*}
[a \le u \le b] = \{x \in \mathbb{R}^N \mid a \le u(x) \le b\}.
\end{equation*}
Moreover, for $\lambda \in \R$ and $y \in \mathbb{R}^N$, we define a rescaled dilation operator $\widetilde{\delta}_\lambda$ by 
\begin{equation*}
	(\widetilde{\delta}_\lambda u)(x) = (\delta_{-\frac{2^*}{N}\lambda} u)(x) = 2^{-\lambda} u(2^{-\frac{2^*}{N}\lambda} x),
\end{equation*}
and define $\widetilde{T}_{y,\lambda} = \tau_y \circ \widetilde{\delta}_{\lambda}$.
Then, $G$ is written as
\begin{equation*}
G = \{ \widetilde{T}_{y,\lambda} \mid y \in \mathbb{R}^N, \lambda \in \mathbb{R}\}.
\end{equation*}
In this section, for $A\subset \R^N$ and $p\ge 1$, we use the following notation.
\begin{align*}
\|u\|_{L^p(A)}^p  = \int_A |u|^p \, dx,\ \qquad
\|u\|_{H^1(A)}^2 = \int_{A}|\nabla u|^2 + u^2\, dx.
\end{align*}

\begin{lem}
\label{lem A.1}
There exists $C_3>0$ such that, for all $u \in \mathcal{D}^{1,2}(\mathbb{R}^N)$,
\begin{equation}\label{A.3}
	\|\chi(u)\|_{H^1(\R^N)}^2 \le C_3 \left(\|\nabla u\|_{L^2([2^{-1}\le |u| \le 2^2])}^2 
	+ \|\nabla u\|_{L^2([2^{-2}\le |u| \le 2^3])}^{2^*} \right)
\end{equation}
\end{lem}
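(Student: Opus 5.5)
We bound the two parts of $\|\chi(u)\|_{H^1(\R^N)}^2$, namely $\int|\nabla\chi(u)|^2\,dx$ and $\int\chi(u)^2\,dx$, separately.

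\textbf{Gradient part.} Since $\chi$ is smooth with compact support, $\chi$ is Lipschitz. By the chain rule for Sobolev functions, $\nabla\chi(u)=\chi'(u)\nabla u$ a.e. Moreover, $\chi'(u)=0$ a.e. outside the set $[2^{-1}\le|u|\le 2^2]$. This holds because $\chi$ vanishes for $|t|\notin[\frac12,4]$, and $\nabla u=0$ a.e. on level sets anyway. Hence
\[
\int_{\R^N}|\nabla\chi(u)|^2\,dx\le\|\chi'\|_\infty^2\,\|\nabla u\|_{L^2([2^{-1}\le|u|\le2^2])}^2 .
\]
This is the first term on the right of \eqref{A.3}.

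\textbf{$L^2$ part.} Here we use \eqref{A.2} and the Sobolev inequality applied to the truncated functions $\chi_j(u)$, $j=-1,0,1$. These functions lie in $\mathcal{D}^{1,2}(\R^N)$: each $\chi_j$ is Lipschitz with $\chi_j(0)=0$, and it is supported away from $0$. Integrating \eqref{A.2} gives
\[
\int\chi(u)^2\,dx\le C_2\sum_{j=-1}^{1}\int\chi_j(u)^{2^*}\,dx\le C_2 S^{-\frac{2^*}{2}}\sum_{j=-1}^{1}\|\nabla\chi_j(u)\|_2^{2^*}.
\]
Each $\chi_j$ is supported in $2^j\cdot[\frac12,4]$, so $\chi_j'(u)$ vanishes outside $[2^{j-1}\le|u|\le 2^{j+2}]$. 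For $j\in\{-1,0,1\}$ this set is contained in $[2^{-2}\le|u|\le2^3]$. Also $\|\chi_j'\|_\infty=\|\chi'\|_\infty$. Therefore
\[
\|\nabla\chi_j(u)\|_2\le\|\chi'\|_\infty\|\nabla u\|_{L^2([2^{-2}\le|u|\le2^3])}.
\]
Summing over $j$ produces the second term on the right of \eqref{A.3}, and we take $C_3=\max\{\|\chi'\|_\infty^2,\;3C_2S^{-2^*/2}\|\chi'\|_\infty^{2^*}\}$.

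\textbf{Main obstacle.} The only delicate point is justifying the chain rule and the membership $\chi_j(u)\in\mathcal{D}^{1,2}(\R^N)$ for general $u\in\mathcal{D}^{1,2}(\R^N)$. We handle it by first working with $u\in C_0^\infty(\R^N)$ and then passing to the limit. Because $\chi_j(0)=0$ and $\chi_j$ is Lipschitz, $\chi_j(u)$ is a $\mathcal{D}^{1,2}$-limit of the compactly supported Lipschitz functions $\chi_j(u_n)$. Alternatively, one can note directly that $\chi_j(u)$ is supported in $[|u|\ge 2^{j-1}]$, which has finite measure since $u\in L^{2^*}$; hence $\chi_j(u)\in H^1(\R^N)$ and the standard chain rule for Lipschitz functions applies. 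The Sobolev inequality then holds for $\chi_j(u)$ by the definition of $S$.
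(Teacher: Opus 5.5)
Your proposal is correct and follows the paper's proof essentially step by step. You split off the gradient term via the chain rule, bound the $L^2$ term through \eqref{A.2} and the Sobolev inequality applied to $\chi_{-1}(u),\chi_0(u),\chi_1(u)$, and use that $\chi_j'(u)$ vanishes outside $[2^{j-1}\le|u|\le 2^{j+2}]\subset[2^{-2}\le|u|\le 2^3]$ with $\|\chi_j'\|_\infty=\|\chi'\|_\infty$. Your added justification that $\chi_j(u)\in H^1(\R^N)\subset\mathcal{D}^{1,2}(\R^N)$, and that the chain rule holds, fills in a point the paper leaves to ``a direct calculation.''
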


\begin{proof}
A direct calculation shows
\begin{align*}
	\|\chi(u)\|_{H^1(\R^N)}^2 
	& = \|\nabla (\chi(u))\|_{L^2(\R^N)}^2 + \|\chi(u)\|_{L^2(\R^N)}^2 \\
	& = \|\chi'(u) \nabla u\|_{L^2(\R^N)}^2 + \|\chi(u)\|_{L^2(\R^N)}^2.
\end{align*}
Using \eqref{A.2} and the Sobolev inequalities, we obtain
\begin{align*}
	\|\chi(u)\|_{L^2(\R^N)}^2 & \le C_2 \sum_{k=-1,0,1}\| \chi_k(u)\|_{L^{2^*}(\R^N)}^{2^*} \\
	& \le C_2 S^{-\frac{2^*}{2}} \sum_{k=-1,0,1}\| \nabla(\chi_k(u))\|_{L^2(\R^N)}^{2^*} \\
	& =  C_2 S^{-\frac{2^*}{2}} \sum_{k=-1,0,1} \|\chi_k'(u) \nabla u\|_{L^2(\R^N)}^{2^*}. 
\end{align*}
Since $\chi'_k$ is bounded and
${\rm supp}(\chi_k'(u))
\subset [2^{k-1}\le |u| \le 2^{k+2}]$ for all $k \in \Z$,
the desired estimate \eqref{A.3} follows.
\end{proof}

We use the following lemma in the proof of Lemma \ref{lem 4.2}.

\begin{lem}
\label{lem A.2}
For any $M>0$, there exists $C>0$ such that, for all $u \in \mathcal{D}^{1,2}(\mathbb{R}^N)$ with $\|\nabla u\|_{L^2(\R^N)}\le M$,
\begin{equation}
\label{A.4}
\|u\|_{L^{2^*}(\R^N)}^{2^*} \le C \left(\sup_{j\in \Z}\sup_{y\in\mathbb{Z}^N} \|\chi(\widetilde{T}_{y,j}u)\|_{L^2(Q)}\right)^{2-\frac{4}{2^*}}.
\end{equation}
\end{lem}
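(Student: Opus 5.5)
We prove \eqref{A.4} by a dyadic decomposition in the values of $u$, in the spirit of Tintarev's argument. The idea is to localize both in the range of $u$ (via $\chi_j$) and in space (via the cubes $Q+y$), and to exploit the gap between the exponents $2^*$ and $2$ through a Sobolev inequality on cubes.

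\textbf{Step 1: decomposition in values.} Since $\sum_{j\in\Z}\chi_j(t)^{2^*}\ge c|t|^{2^*}$ (each $t\neq0$ has $|t|\in[2^j,2^{j+1}]$ for some $j$, where $\chi_j(t)=|t|$), we get
\[
\|u\|_{2^*}^{2^*}\le C\sum_{j\in\Z}\|\chi_j(u)\|_{2^*}^{2^*}.
\]

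\textbf{Step 2: rescaling each level.} We relate $\chi_j(u)$ to $\chi(\widetilde{T}_{0,j}u)$. We have $\chi_j(u)(x)=2^j\chi(2^{-j}u(x))$, while $(\widetilde{\delta}_{j}u)(x)=2^{-j}u(2^{-\frac{2^*}{N}j}x)$. The scaling of $\widetilde\delta$ is chosen so that $\chi(\widetilde\delta_j u)$ and $\chi_j(u)$ differ by an $L^{2^*}$-preserving dilation together with a factor matching $L^2$-type norms. Concretely, the change of variables gives
\[
\|\chi_j(u)\|_{2^*}^{2^*}=\|\chi(\widetilde{\delta}_{j}u)\|_{2^*}^{2^*},
\]
and likewise for the gradient in $L^2$. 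I will verify these exponents first; the relation $2^*\cdot\frac{N}{2^*}=N$ makes the $L^{2^*}$ integral invariant.

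\textbf{Step 3: cubes and Sobolev on $Q$.} Split $\R^N=\bigcup_{y\in\Z^N}(Q+y)$ and apply the Sobolev inequality on the unit cube $H^1(Q)\hookrightarrow L^{2^*}(Q)$ to $v=\chi(\widetilde\delta_j u)$ translated to $Q$:
\[
\|v\|_{L^{2^*}(Q+y)}^{2^*}\le C\|v\|_{H^1(Q+y)}^{2}\,\|v\|_{L^{2^*}(Q+y)}^{2^*-2}.
\]
By \eqref{A.1}, $\|v\|_{L^{2^*}(Q+y)}^{2^*}\le C_1\|v\|_{L^2(Q+y)}^2$, so
\[
\|v\|_{L^{2^*}(Q+y)}^{2^*-2}\le C\|v\|_{L^2(Q+y)}^{2-\frac4{2^*}}.
\]
The $L^2$ norm on $Q+y$ is exactly $\|\chi(\widetilde T_{-y,j}u)\|_{L^2(Q)}$, which is bounded by the supremum appearing in \eqref{A.4}. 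Summing over $y$ gives
\[
\|\chi_j(u)\|_{2^*}^{2^*}\le C\Big(\sup\Big)^{2-\frac4{2^*}}\|\chi(\widetilde\delta_j u)\|_{H^1(\R^N)}^2.
\]

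\textbf{Step 4: summing over $j$.} By Lemma \ref{lem A.1} together with the dilation invariance from Step 2, $\|\chi(\widetilde\delta_ju)\|_{H^1}^2$ is bounded by
\[
C\Big(\|\nabla u\|^2_{L^2([2^{j-1}\le|u|\le2^{j+2}])}+\|\nabla u\|^{2^*}_{L^2([2^{j-2}\le|u|\le2^{j+3}])}\Big).
\]
The level sets overlap only finitely many times, so summing over $j$ gives at most
\[
C\big(\|\nabla u\|_2^2+\|\nabla u\|_2^{2^*}\big)\le C(M^2+M^{2^*}).
\]
Here we use $\sum_j b_j^{2^*/2}\le(\sum_j b_j)^{2^*/2}$ for the second term. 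Combining Steps 1–4 yields \eqref{A.4}.

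The main obstacle is Step 2: checking that the specific normalization of $\widetilde\delta_\lambda$ makes the $L^2$ norm of $\chi$ transform compatibly with the $L^{2^*}$ and gradient norms. With exponent $2^*/N$, the $L^{2^*}$ norm is preserved but the $L^2$ mass and the gradient may pick up factors. If an unwanted power of $2^j$ appears, I would instead apply the cube Sobolev step directly to $\chi_j(u)$ on cubes of side $2^{2^*j/N}$ and rescale afterwards, keeping track of the factors.
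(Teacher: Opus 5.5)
Your proposal is correct and is essentially the paper's argument: dyadic decomposition in the values of $u$ via $\chi_j$, rescaling by $\widetilde\delta_j$, Sobolev on unit cubes combined with \eqref{A.1}, then Lemma \ref{lem A.1} and bounded overlap of the level bands. The only difference is where $\|\nabla u\|_2\le M$ enters: the paper uses it band by band, $\|\nabla u\|^{2^*}_{L^2(\text{band})}\le M^{2^*-2}\|\nabla u\|^2_{L^2(\text{band})}$, before summing over $j$, whereas you sum first and use $\sum_j b_j^{2^*/2}\le(\sum_j b_j)^{2^*/2}$. Your worry in the last paragraph is unfounded. The operator $\widetilde\delta_j=\delta_{-\frac{2^*}{N}j}$ preserves both $\|\nabla\cdot\|_2$ (also on the corresponding level bands) and $\|\cdot\|_{2^*}$. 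The $L^2$ mass of $\chi(\widetilde\delta_j u)$ never has to be compared with anything, because Lemma \ref{lem A.1} is applied directly to $\widetilde\delta_j u$.
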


\begin{proof}
We divide the proof into several steps.

\medskip

\noindent
\textbf{Step 1.} \textit{There exists $C' > 0$ such that, for all $u \in \mathcal{D}^{1,2}(\mathbb{R}^N)$ with $\|\nabla u\|_{L^2(\R^N)}\le M$,
\begin{equation}
\label{A.5}
\|\chi(u)\|_{L^{2^*}(\R^N)}^{2^*} 
\le C' \left(\sup_{y\in\mathbb{Z}^N} \|\chi(\tau_y u)\|_{L^2(Q)}\right)^{2-\frac{4}{2^*}}
\|\nabla u\|_{L^2([2^{-2} \le |u| \le 2^{3}])}^2.
\end{equation}}
\begin{proof}[Proof of Step 1.]
From the Sobolev inequality, there exists $C_4>0$ such that $\|u\|_{L^{2^*}(Q)}^2\le C_4\|u\|_{H^1(Q)}^2$.
Thus, we have
\begin{align}
\|\chi(u)\|_{L^{2^*}(\R^N)}^{2^*} 
& =\sum_{y\in \Z^N}\|\chi(u)\|_{L^{2^*}(Q+y)}^{2^*} \notag \\
& \le \left(\sup_{y\in\mathbb{Z}^N} \|\chi(u)\|_{L^{2^*}(Q+y)}\right)^{2^*-2}\sum_{y\in \Z^N}\|\chi(u)\|_{L^{2^*}(Q+y)}^2 \notag \\
& \le C_4\left(\sup_{y\in\mathbb{Z}^N} \|\chi(u)\|_{L^{2^*}(Q+y)}\right)^{2^*-2}\sum_{y\in \Z^N}\|\chi(u)\|_{H^1(Q+y)}^2 \notag \\
\label{A.6} & = C_4\left(\sup_{y\in\mathbb{Z}^N} \|\chi(\tau_y u)\|_{L^{2^*}(Q)}\right)^{2^*-2}\|\chi(u)\|_{H^1(\R^N)}^2.
\end{align}
Using \eqref{A.1}, we have
\begin{equation}\label{A.7}
	\|\chi(\tau_y u)\|_{L^{2^*}(Q)}^{2^*} \le C_1\|\chi(\tau_y u)\|_{L^2(Q)}^2.
\end{equation}
Combining \eqref{A.6}, \eqref{A.7}, and \eqref{A.3}, we obtain \eqref{A.5}.
\end{proof}

\noindent
\textbf{Step 2.} \textit{There exists $C''>0$ such that for all $j \in \mathbb{Z}$ and $u \in \mathcal{D}^{1,2}(\mathbb{R}^N)$ with $\|\nabla u\|_{L^2(\R^N)}\le M$, 
\begin{equation}
\label{A.8}
\|\chi_j (u)\|_{L^{2^*}(\R^N)}^{2^*} 
\le C'' \left(\sup_{y\in\mathbb{Z}^N} \|\chi(\widetilde{T}_{y,j} u)\|_{L^2(Q)}\right)^{2-\frac{4}{2^*}}
\|\nabla u\|_{L^2([2^{j-2} \le |u| \le 2^{j+3}])}^2. 
\end{equation}}
\begin{proof}[Proof of Step 2.]
By direct calculation, for $j\in \Z$ and $u \in \mathcal{D}^{1,2}(\mathbb{R}^N)$ with $\|\nabla u\|_{L^2(\R^N)}\le M$, we have
\begin{align}
	\|\nabla (\widetilde{\delta}_j u)\|_{L^2(\R^N)}^2 & = \|\nabla u\|_{L^2(\R^N)}^2,\\
	\|\nabla (\widetilde{\delta}_j u)\|_{L^2([2^{-2} \le |\widetilde{\delta}_j u| \le 2^3])}^2 
	& = \|\nabla u\|_{L^2([2^{j-2} \le |u| \le 2^{j+3}])}^2, \\
	\|\chi (\widetilde{\delta}_j u)\|_{L^{2^*}(\R^N)}^{2^*} & = \|\chi_j(u)\|_{L^{2^*}(\R^N)}^{2^*}.
\end{align}
Applying \eqref{A.5} to $\widetilde{\delta}_j u$, we obtain \eqref{A.8}.
\end{proof}

\noindent
\textbf{Step 3.} \textit{Conclusion.}
\begin{proof}[Proof of Step 3.]
Since $\chi_j(t)=|t|$ for $|t|\in[2^j,2^{j+1}]$, we have
\begin{equation}
\label{A.12}
\|u\|_{L^{2^*}([2^j \le |u| \le 2^{j+1}])}^{2^*} \le \|\chi_j(u)\|_{L^{2^*}(\R^N)}^{2^*}\quad\text{for all } j \in \mathbb{Z}.
\end{equation}
Combining \eqref{A.8} and \eqref{A.12}, we have
\begin{align*}
	\|u\|_{L^{2^*}(\R^N)}^{2^*} 
	& = \sum_{j \in \mathbb{Z}} \|u\|_{L^{2^*}([2^j \le |u| \le 2^{j+1}])}^{2^*}\\
	& \le  \sum_{j \in \mathbb{Z}} \|\chi_j(u)\|_{L^{2^*}(\R^N)}^{2^*}\\
	&\le C'' \sum_{j \in \mathbb{Z}} \left(\sup_{y\in\mathbb{Z}^N} \|\chi(\widetilde{T}_{y,j} u)\|_{L^2(Q)}\right)^{2-\frac{4}{2^*}}
	\|\nabla u\|_{L^2([2^{j-2} \le |u| \le 2^{j+3}])}^2 \\
	&\le C'' \left(\sup_{j\in \Z}\sup_{y\in\mathbb{Z}^N} \|\chi(\widetilde{T}_{y,j} u)\|_{L^2(Q)}\right)^{2-\frac{4}{2^*}}
	\sum_{j\in \Z}\|\nabla u\|_{L^2([2^{j-2} \le |u| \le 2^{j+3}])}^2 \\
	&\le 5C'' \left(\sup_{j\in \Z}\sup_{y\in\mathbb{Z}^N} \|\chi(\widetilde{T}_{y,j} u)\|_{L^2(Q)}\right)^{2-\frac{4}{2^*}}\|\nabla u\|_{L^2(\R^N)}^2.
\end{align*}
Thus, \eqref{A.4} holds with $C =  5C''M^2$.
\end{proof}
Steps 1--3 complete the proof.
\end{proof}

\begin{proof}[Proof of Lemma \ref{lem 4.2}]
Assume that ${u}_k \overset{G}{\rightharpoonup} 0$. Since $u_k \rightharpoonup 0$ weakly in $\mathcal{D}^{1,2}(\mathbb{R}^N)$, we see that $(u_k)$ is bounded in $\mathcal{D}^{1,2}(\mathbb{R}^N)$. 
By Lemma \eqref{lem A.2}, we have
\begin{equation}
\label{A.13}
\|u_k\|_{L^{2^*}(\R^N)}^{2^*} \le C \left(\sup_{j\in \Z}\sup_{y\in\mathbb{Z}^N} \|\chi(\widetilde{T}_{y,j} u_k)\|_{L^2(Q)}\right)^{2-\frac{4}{2^*}} \text{for all $k \in \mathbb{N}$},
\end{equation}
where $C>0$ is the constant appearing in Lemma \ref{lem A.2}.
Then, there exist $(y_k) \subset \Z^N$ and $(j_k) \subset \mathbb{Z}$ such that
\begin{equation}
\label{A.14}
	\sup_{j\in\mathbb{Z}}\sup_{y \in \mathbb{Z}^N} \|\chi(\widetilde{T}_{y,j} u_k)\|_{L^2(Q)} 
	\le 2\|\chi(\widetilde{T}_{y_k,j_k} u_k)\|_{L^2(Q)}.
\end{equation}
Since ${u}_k \overset{G}{\rightharpoonup} 0$, we have 
$\widetilde{T}_{y_k,j_k}u_k \rightharpoonup 0$ weakly in $\mathcal{D}^{1,2}(\mathbb{R}^N)$.
By the local compactness of the Sobolev embedding, it follows that
\begin{equation*}
\widetilde{T}_{y_k,j_k}u_k \to 0 \quad \text{in } L^{2}(Q).
\end{equation*}
Hence, we have
\begin{equation*}
\|\chi(\widetilde{T}_{y_k,j_k} u_k)\|_{L^2(Q)} \to 0.
\end{equation*}
Therefore, by \eqref{A.14}, we obtain
\begin{equation}
\label{A.15}
\sup_{j\in\mathbb{Z}}\sup_{y \in \mathbb{Z}^N} \|\chi(\widetilde{T}_{y,j} u_k)\|_{L^2(Q)} \to 0.
\end{equation}
Combining \eqref{A.13} and \eqref{A.15}, we conclude that $u_k \to 0$ in $L^{2^*}(\mathbb{R}^N)$.
\end{proof}
\end{appendix}

\subsection*{Data Availability}
No data was used for the research described in the article.

\end{document}